\theoremstyle{plain}
\newtheorem{theorem}{Theorem}[section]
\newtheorem{prop}[theorem]{Proposition}
\newtheorem{lemma}[theorem]{Lemma}
\theoremstyle{definition}
\newtheorem{remark}[theorem]{Remark}
\newtheorem{definition}[theorem]{Definition}
\newcommand{\A}{\mathcal{A}}
\newcommand{\AS}{\mathbb{S}}
\newcommand{\ts}{\hspace{0.5pt}}
\newcommand{\nts}{\hspace{-0.5pt}}
\newcommand{\RR}{\mathbb{R}\ts}
\newcommand{\PP}{\mathbb{P}\ts}
\newcommand{\SSS}{\mathbb{S}}
\newcommand{\LL}{\mathbb{L}}
\newcommand{\NN}{\mathbb{N}}
\newcommand{\EE}{\mathbb{E}}
\newcommand{\cA}{\mathcal{A}}
\newcommand{\cF}{\mathcal{F}}
\newcommand{\cO}{\mathcal{O}}
\newcommand{\cP}{\mathcal{P}}
\newcommand{\cQ}{\mathcal{Q}}
\newcommand{\cR}{\mathcal{R}}
\newcommand{\one}{\mathbbm{1}}
\newcommand{\bd}{{\boldsymbol{d}}}
\newcommand{\bdelta}{{\boldsymbol{\delta}}}
\newcommand{\bepsilon}{{\boldsymbol{\varepsilon}}}
\newcommand{\blambda}{{\boldsymbol{\lambda}}}
\newcommand{\bSigma}{{\boldsymbol{\varSigma}}}
\newcommand{\bpmax}{{\boldsymbol{\pmax}}}
\newcommand{\bcdot}{{\boldsymbol{\cdot}}}
\newcommand{\bcQ}{{\boldsymbol{\cQ}}}
\newcommand{\Sigmacont}{\varSigma^{\text{c}}}
\newcommand{\bSigmacont}{\boldsymbol{\varSigma}^{\text{c}}}
\newcommand{\udo}[1]{\underaccent{$\text{.}$}{#1\ts}\nts}
\newcommand{\pmin}{\ts\ts\underline{\nts\nts 0\nts\nts}\ts\ts}
\newcommand{\pmax}{\ts\ts\underline{\nts\nts 1\nts\nts}\ts\ts}
\newcommand{\ee}{\mathrm{e}}
\newcommand{\dd}{\, \mathrm{d}}
\newcommand{\bs}{\boldsymbol}
\newcommand{\trans}{\raisebox{1pt}{$\scriptscriptstyle\mathsf{T}$}}
\newcommand{\myfrac}[2]{\frac{\raisebox{-2pt}{$#1$}}
      {\raisebox{0.5pt}{$#2$}}}
\definecolor{gre}{rgb}{.06,.49,0.03} 
\DeclareMathOperator{\id}{Id}
\newcommand{\defeq}{\mathrel{\mathop:}=}
\newcommand{\eqdef}{=\mathrel{\mathop:}} 
\newcommand{\bigo}[1]{\mathcal{O}(#1)}
\begin{document}

\title[Ancestral lines under migration and recombination] {Solving the migration-recombination equation from a genealogical point of view}

\author{F. Alberti}
\address[F. Alberti]{Faculty of Mathematics,  Bielefeld University, \newline
\hspace*{\parindent}Postbox 100131, 33501 Bielefeld, Germany}
\email{falberti@math.uni-bielefeld.de}

\author{E. Baake}
\address[E. Baake]{Faculty of Technology,  Bielefeld University, \newline
\hspace*{\parindent}Postbox 100131, 33501 Bielefeld, Germany}
\email{ebaake@techfak.uni-bielefeld.de}

\author{I. Letter}
\address[I. Letter]{Statistics Department, University of Oxford, \newline
\hspace*{\parindent}24-29 St Giles, Oxford OX1 3LB, United Kingdom}
\email{restucci@stats.ox.ac.uk}

\author{S. Mart\'inez}
\address[S. Mart\'inez]{Departement of Mathematical Engineering and \newline
\hspace*{\parindent}Center of Mathematical Modeling, \newline
\hspace*{\parindent}UMI 2807 UCHILE-CNRS,  Universidad de Chile, \newline
\hspace*{\parindent}Santiago, CHILE}
\email{smartine@dim.uchile.cl}

\begin{abstract}
We consider the discrete-time migration-recombination equation, a deterministic, nonlinear dynamical system that describes the evolution of the genetic type distribution of a population evolving under migration and recombination in a law of large numbers setting.  We relate this dynamics (forward in time) to a Markov chain, namely a labelled partitioning process, backward in time. This way, we obtain a stochastic representation of the solution of the migration-recombination equation. As a consequence, one obtains an explicit solution of the nonlinear dynamics, simply in terms of powers of the transition matrix of the Markov chain. The limiting and quasi-limiting behaviour of the Markov chain are investigated, which gives immediate access to the asymptotic behaviour of the dynamical system. We finally sketch the analogous situation in continuous time.

\end{abstract}

\maketitle

\noindent \emph{keywords:} migration-recombination equation; ancestral recombination graph; duality; labelled partitioning process; quasi-stationarity; Haldane linearisation

\bigskip

\noindent \emph{MSC:} 60J75; 
 92D15; 
 60C05; 
 05C80. 
 37N25 

\section{Introduction}
\label{Section0}
Recombination is a genetic mechanism that `mixes' or `reshuffles' the genetic
material of different individuals from generation to generation; it takes place during the reproductive cycle of sexually reproducing organisms. The analysis of models that describe the evolution of populations
under recombination  together with other processes are
among the major challenges in population genetics.

In this contribution, we consider 
the evolution under the joint action of recombination and migration of individuals between discrete locations (or \emph{demes}); we mainly focus on discrete time, where generations do not overlap. We will be concerned with a deterministic approach here, which assumes that the population is so large that a law of large numbers applies and random fluctuations (`genetic drift')  may be neglected. The resulting \emph{migration-recombination equation} is a large, nonlinear dynamical system
that describes the evolution of the genetic composition of each local
population over time, where the genetic composition
is identified with a probability distribution (or measure) on a space of
sequences of finite length. This model is a variant of the migration-selection-recombination equation formulated by~\citet{Buerger09}, who analysed its asymptotic behaviour in the classical dynamical systems setting, forward in time. It is our goal to complement this picture by relating this nonlinear dynamical
system to a linear one by embedding the solution into a higher dimensional space, a technique known as \emph{Haldane linearisation} \citep{HaleRingwood,Lyu} in the context of genetic algebras. This extends the approach taken by~\citet{haldane} to the case with migration. The resulting linear system has a natural interpretation as a Markov chain on the set of \emph{labelled} partitions of the set of sequence sites. Intuitively, this Markov chain describes how the genetic material of an individual from the current population is partitioned across an increasing number of ancestors, along with their locations, as the lines of descent are traced back into the past. This backward (or \emph{dual}) process combines a variant of the \emph{ancestral recombination graph}
\citetext{\citealp{hudson,griffithsmarjoram96,griffithsmarjoram97,BhaskarSong}; see also \citealp[Ch.~3.4]{durrett}} with a variant of the \emph{ancestral migration graph} \citep{Noto90,MatsenWakeley}. It is tractable in the law of large numbers regime considered here, due to the absence of coalescence; this was previously exploited for the recombination equation (without migration) by \citet{BaakeBaakeSalamat},  \citet{haldane} and  \citet{Martinez}; see \citet{recoreview} for a review. For an application of a similar idea in the context of the ancestral selection graph, see~\citet{SladeWakeley}.

All this leads to a stochastic representation of the solution of the (nonlinear, deterministic) migration-recombination equation in terms of the labelled partitioning process. As a consequence, one obtains an explicit solution of the nonlinear dynamics, simply in terms of powers of the transition matrix of the Markov chain. In particular, the asymptotic behaviour of the migration-recombination equation emerges without any additional effort, via the (unique) absorbing state of the Markov chain. We also investigate the quasi-limiting behaviour of the Markov chain, based on ideas by~\citet{Martinez}.

The paper is organised as follows. In Section~\ref{Section1}, we set the scene and introduce the forward-time model. In Section~\ref{sec:furthernotions}, we use the notion of (labelled) \emph{recombinators} to reformulate the forward model in a compact way. A crucial property of the dynamics, namely its consistency under marginalisation, is established in Section~\ref{sec:marginal}. The core of the paper is Section~\ref{sec:forwardsolution}, where we solve the forward iteration, together with Section~\ref{sec:stochastic}, which establishes the connection to the labelled partitioning process in terms of a duality, together with a genealogical interpretation. Section~\ref{sec:asymptotics} is devoted to its limiting and quasi-limiting behaviour, and Section~\ref{sec:continuoustime} sketches how the approach carries over to continuous time.

\section{The migration-recombination model}
\label{Section1}
Let us recapitulate the discrete-time \emph{migration-recombination equation} by~\citet{Buerger09}. 
The genetic information of an individual is
encoded in terms of a finite sequence of letters, indexed by the set
$[n] \defeq \{1,\ldots,n\}$
of sequence sites, where $n>1$ is the fixed length of the sequences. The sites
may either be interpreted as nucleotide positions in a DNA sequence, 
or as gene loci on
a chromosome.  For each site $i \in [n]$, there is a set (or \emph{alphabet}) $\A_i$
of \emph{letters} (to be interpreted as
nucleotides or alleles) that may possibly occur at that site.  For the sake of
simplicity, we restrict ourselves to \emph{finite} sets $\A_i$ here, but
this generalises easily.
A \emph{type} is thus identified with a sequence
\[
a=(a^{}_1, \dots ,
a^{}_n) \in \A_1 \times \cdots \times \A_n \eqdef \A\,, 
\]
where $\A$ is called the
\emph{type space}. 
We denote by $\cP(\cA)$ the set of all probability measures on $\cA$. We will also refer to such a probability measure as a  \emph{type distribution} or \emph{population}. This implies that we consider \emph{haploid individuals} or \emph{gametes};  it will be sufficient to work at this level, since, in contrast to \citet{Buerger09}, we do not consider selection.  Indeed, in the absence of selection, diploid genotypes are independent combinations of haploid gametes at all stages of the life cycle, that is, one has Hardy-Weinberg equilibrium throughout.

It will be crucial for the later analysis to not only consider complete  sequences (defined over the full set $[n]$), but also (sub)sequences (`marginal' types) that are defined over subsets of $[n]$. Given $U \subseteq [n]$, we set
\begin{equation*}
\cA_U \defeq \bigtimes_{i \in U} \cA_i.
\end{equation*}
Note that $\cA_{[n]}=\cA$.  Furthermore, $\cA_\varnothing$ is the empty Cartesian product, which is a set with a single element, namely the empty sequence $e$.
For $V \subseteq U \subseteq [n]$ and $a^{} \in \cA_U$, we define the corresponding \emph{marginal type} with respect to $V$ by
\begin{equation*}
a^V \defeq (a^{}_i)^{}_{i \in V}.
\end{equation*}
In line with this, for $\nu^{} \in \cP(\cA_U)$, we define its \emph{marginal distribution} (with respect to $V$) as the probability distribution on $\cA_V$ given by
\begin{equation*}
\nu^V (E) \defeq \nu^{}(E \times \cA_{U \setminus V})
\end{equation*}
for all $E \subseteq \cA_V$. In words, $\nu^V(E)$ is the probability that the marginal with respect to $V$ of a randomly sampled type from $\nu^{}$ agrees with some element of $E$.
In somewhat more technical terms, $\nu^V$ is the push-forward of $\nu^{}$ under the canonical projection from $\cA_U$ to $\cA_V$. Clearly, the map $\nu^{} \mapsto \nu^V$ is linear.
 
In order to discuss migration, we introduce a finite set $L$ of \emph{locations} (or \emph{demes}).  The  population at  location $\alpha \in L$ in generation $t \in \NN_0$ is denoted by $\mu_t(\alpha) \in \cP(\cA)$.
The collection of all local populations  is summarised into the (column) vector $\mu_t = \big (\mu_t(\alpha) \big )_{\alpha \in L} \in \cP(\cA)^L$; we call $\mu$ a \emph{spatially structured population}, or a \emph{metapopulation}. Moreover, for $U \subseteq [n]$, we write $\mu^U_t = \big (\mu^U_t(\alpha) \big )_{\alpha \in L}$ for the vector of marginal populations.
Throughout, indices of vectors and matrices that refer to locations are written as arguments.
Unless stated otherwise,  vectors are understood as column vectors.

The migration-recombination equation is a discrete-time dynamical system that describes the deterministic evolution of a metapopulation with non-overlapping generations. We assume that, in each generation, this evolution proceeds in two stages. First, individuals migrate between locations; then, random mating takes place among individuals at the same location, followed by reproduction involving recombination\footnote{While recombination, strictly speaking, does \emph{not} occur during reproduction itself, this is not relevant in the simplified setting of our model; simply put, as we are working at the level of gametes, the word `reproduction' refers, in this context, to the formation of new germ cells prior to mating.}. Discrete generations will be indexed by $t \in \NN_0$, where a population at time $t$ is understood as the population after the $t$-th round of recombination, but before migration; we will use the corresponding half integers $t+\frac{1}{2}$ to indicate the population after migration, but before recombination.

\subsection{Describing migration}
\label{subsec:descmig}
We first consider migration, following the presentation by \citet[Chap.~6.2]{Nagy}.
The most straightforward way to describe migration is via the so-called \emph{forward migration matrix} $\tilde{M}$. It is a stochastic matrix indexed by $L$, where the entry $\tilde{M}(\alpha,\beta)$ is the probability that a randomly chosen individual at location $\alpha$ migrates to location $\beta$ in the next generation.
However, it is more convenient to work instead with the \emph{backward migration matrix} $M$. It is also a stochastic matrix, and $M(\alpha,\beta)$ is the probability that a randomly chosen individual that currently lives at location $\alpha$ has migrated from location $\beta$. 
We assume that the local population sizes  $c(\alpha) \in \RR_{>0}$ remain constant over time. This is the case if either
\begin{equation}\label{constpopsizes}
c(\alpha) = \sum_{\beta \in L} c(\beta) \tilde{M}(\beta,\alpha) 
\end{equation}
for all $\alpha \in L$, or if population regulation takes place after the migration step.
In any case, denoting the location of a randomly sampled individual at time $t+ \frac{1}{2}$ by $\ell_{t+\frac{1}{2}}$ and its location in generation $t$ by $\ell_{t}$, we have 
\begin{equation*}
\begin{split}
M(\alpha,\beta) &= \PP \big (\ell_{t} = \beta \mid \ell_{t+\frac{1}{2}} = \alpha \big ) 
= \myfrac{\PP\big (\ell_{t} = \beta, \ell_{t+\frac{1}{2}} = \alpha \big )}{\PP \big (\ell_{t+\frac{1}{2}} = \alpha \big )}\\ 
&= \myfrac{\PP \big (\ell_{t+\frac{1}{2}} = \alpha \mid \ell_{t} = \beta \big ) \PP \big (\ell_{t} = \beta \big )}{\PP\big (\ell_{t+\frac{1}{2}} = \alpha \big )}
= \myfrac{c(\beta)}{c(\alpha)} \tilde{M}(\beta,\alpha).
\end{split}
\end{equation*}
Note that $M$ is stochastic by definition,  i.e. $\sum_{\beta \in L} M(\alpha, \beta)=1$ and $M(\alpha,\beta) \geqslant 0$ for all $\alpha,\beta \in L$.

For additional background, see \citet[Chapter 6.2]{Nagy}. 
In what follows, we will work exclusively with the backward migration matrix. Since we are only interested in relative type frequencies, the population sizes $c(\alpha)$ are irrelevant.
After migration (but before recombination), the local population at  $\alpha$ is therefore given by
\begin{equation}\label{distaftermigration}
\mu_{t + \frac{1}{2}}(\alpha) = \sum_{\beta \in L} M(\alpha, \beta) \mu_t(\beta),
\end{equation}
and the metapopulation may be written compactly as  
\begin{equation}\label{aftermigrationvector}
\mu_{t + \frac{1}{2}} = M \mu_t.
\end{equation}

\subsection{Describing recombination}
\label{subsec:descreco}
To describe recombination, we slightly modify the model by \citet{Buerger09} and follow the notation of \citet{Martinez}.
Here, the partitions of $[n]$ and its subsets will play a central role, see also \citet{haldane} or \citet{BaakeBaakeSalamat}. For $U \subseteq [n]$, a partition of $U$ is a set $\delta$ of mutually disjoint, non-empty subsets of $U$ whose union is $U$. We will also refer to the elements of a partition as \emph{blocks}. The set of all partitions of $U$ is denoted by $\AS(U)$. We say that $\varepsilon$ \emph{is finer than $($is a refinement of$)$} $\delta$  ($\varepsilon \preccurlyeq \delta$) if every block of $\varepsilon$ is contained in some block of $\delta$. The relation $\preccurlyeq$ defines a partial order on $\AS(U)$. We denote the unique minimal and maximal elements in $\AS(U)$ by
$
\pmin_U^{} \defeq \big  \{\{i\} : i \in U \big \}
$
and
$
\pmax_U^{} \defeq \{U\};
$
when $U = [n]$, we drop the subscript and write $\pmin$ and $\pmax$, rather than $\pmin_{[n]}$ and $\pmax_{[n]}$.
By
 $\delta \wedge \varepsilon$  we denote the \emph{coarsest common refinement} of $\delta$ and $\varepsilon$, that is,
\begin{equation*}
\delta \wedge \varepsilon \defeq \{d \cap e  : d \cap e \neq \varnothing, d \in \delta, e \in \varepsilon \};
\end{equation*}
it is the coarsest partition finer than both $\delta$ and $\varepsilon$.

We say that an offspring of a local population $\nu$ is \emph{recombined according to} $\delta = \{d_1,\ldots,d_m\} \in \AS([n])$  if it has $m$ parents of types $a^{(1)}, \ldots, a^{(m)} \in \cA^{[n]}$, all sampled independently from $\nu$, and inherits the letters at the sites in $d_i$ from the parent of type $a^{(i)}$. That is, the type of the offspring is $b = (b_1,\ldots,b_n)$, where 
$
b_i \defeq a^{(j)}_i $ if  $i \in d_j$.
The biologically reasonable cases are $m=1$ (then $\delta = \pmax$ and the full offspring sequence is inherited from a single parent) and $m=2$ (the offspring sequence is pieced together from two parents). The choice $m>2$ implies more than two parents, which is not biologically realistic, but we include this case (and, in this way, generalise \citet{Buerger09}) since it is mathematically interesting  and does not require additional effort. It is then clear that the type of an offspring of $\nu$ that is recombined according to $\delta$ has the distribution
\[
\bigotimes_{d \in \delta}\nu^d.
\]
That is, recombination according to $\delta$ turns $\nu$ into the product measure of the marginals with respect to the blocks of $\delta$; this reflects the random mating, that is, the independence of the parents. Again, we understand this product to respect the ordering of the sites.  

We assume that, in each time step, the entire local population is replaced; the proportion of individuals that are replaced by offspring recombined according to $\delta \in \AS([n])$ is denoted by $r_\delta^{} \geqslant 0$, where  
\begin{equation*}
\sum_{\delta \in \AS([n])} r_\delta^{} = 1.
\end{equation*}
The collection $(r_\delta^{})^{}_{\delta \in \AS({[n]})}$ is known as the \emph{recombination distribution}.
Thus, the components of $\mu_{t+1}$ are given by
\begin{equation}\label{recomig_ode}
\mu_{t+1}(\alpha) =  \sum_{\delta \in \AS({[n]})} r^{}_\delta  \bigotimes_{d \in \delta} \mu^d_{t + \frac{1}{2}}(\alpha) = \sum_{\delta \in \SSS({[n]})} r^{}_\delta  \bigotimes_{d \in \delta}  \sum_{\beta \in L} M(\alpha, \beta) \mu^d_t(\beta),
\end{equation}
where we have used \eqref{distaftermigration} and the linearity of marginalisation in the last step.
Equation~\eqref{recomig_ode} is called the \emph{migration-recombination equation}, or MRE for short. 

\section{Reformulation of the model}
\label{sec:furthernotions}
Extending concepts established by \citet{BaakeBaakeSalamat}, \citet{haldane} and \citet{Martinez}, we now formulate the MRE~\eqref{recomig_ode} in a more compact way.
This involves labelling the blocks of a partition by elements of $L$ to keep track of where the letters in the blocks come from. 
\begin{definition}\label{labelledpartitions}
A labelled partition of $U \subseteq {[n]}$ is a collection $\bdelta := \{\bd_1, \ldots, \bd_m\}$ for some $m \leqslant \lvert U \rvert$, where
$\bd_i = (d_i,\lambda_i)$,  $\delta = \{d_1, \ldots, d_m\}$ is a partition of $U$, and $\lambda_i \in L$ for $1 \leqslant i \leqslant m$. 
We call $\delta$ the \emph{base} of $\bdelta$, refer to its elements as the blocks of $\bdelta$, and interpret  $\lambda_i$  as the \emph{label} of block $d_i$. We write $\LL \AS (U)$ for the set of all labelled partitions of $U$. 
\hfill $\diamondsuit$
\end{definition} 

In order to rewrite Eq.~\eqref{recomig_ode}, we now introduce the \emph{labelled recombinator}. It is the labelled analogue of the recombinator used by
\cite{haldane} for unlabelled partitions. Since we will later also be interested in the evolution of the distribution of subsequences (compare Section~\ref{sec:marginal}), we introduce the concept in the required generality right away. 
\begin{definition}\label{labelledrecombinators}
Let $U \subseteq {[n]}$ and $\bdelta \in \LL \AS (U)$. Then, the labelled recombinator (with respect to $\bdelta$), namely $\cR^{U}_{\bdelta} : \cP(\cA_U^{})^L \to \cP(\cA_U^{})$, is defined by
\begin{equation*}
\cR_{\bdelta}^{U} (\nu) \defeq \bigotimes_{(d,\lambda) \in \bdelta} \nu^d  (\lambda ) \ts ;
\end{equation*}
if $U = {[n]}$, we will drop the superscript and write $\cR^{}_{\bdelta}$ instead of $\cR^{{[n]}}_{\bdelta}$.
\hfill $\diamondsuit$
\end{definition}

In words, $\cR_{\bdelta}^{}(\nu)$ is the distribution of the type of an offspring individual that is recombined according to $\delta$, where the parent of the labelled block $(d,\lambda)$ is sampled from the local population $\nu( \lambda )$. A similar interpretation holds for the \emph{marginal} recombinators; see Theorem~\ref{marginalisation} and Remark~\ref{onestepancestrylabelled}.
With this, Eq.~\eqref{recomig_ode} can be restated as follows.
\begin{lemma}\label{zusammenfassen}
The \textnormal{MRE}~\eqref{recomig_ode} can be written as
\begin{equation}\label{mainrewritten}
\mu_{t+1} = \sum_{\bdelta
\in \LL\SSS({[n]})} p^{}_{\bdelta} \cR_{\bdelta}^{} (\mu_t)
\end{equation}
with 
\begin{equation*}
p^{}_\bdelta \defeq \big (p^{}_\bdelta(\alpha) \big)_{\alpha \in L},
\end{equation*}
where the \emph{migration-recombination probabilities} are given by
\begin{equation*}
p^{}_{\bdelta}(\alpha) \defeq  r_\delta^{} \prod_{(d,\lambda) \in \bdelta} M  (\alpha,\lambda)
\end{equation*}
and are normalised, i.e. satisfy
\begin{equation*}
\sum_{\bdelta \in \LL \AS({[n]})} p^{}_\bdelta(\alpha) = 1 \text{ for all } \alpha \in L.
\end{equation*}
\end{lemma}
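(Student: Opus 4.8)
The plan is to start from the MRE in the form~\eqref{recomig_ode}, expand the tensor product appearing there by multilinearity, and read off the labels in the process. Fix a location $\alpha \in L$ and a partition $\delta = \{d_1,\ldots,d_m\} \in \AS([n])$. Since $M(\alpha,\cdot)$ is a probability vector on $L$, each factor $\sum_{\beta \in L} M(\alpha,\beta)\ts\mu^{d_j}_t(\beta)$ is a convex combination of probability measures on $\cA_{d_j}$, hence itself an element of $\cP(\cA_{d_j})$. The tensor product of measures is multilinear in each of its arguments and, by our convention, respects the ordering of the sites, so
\begin{equation*}
\bigotimes_{j=1}^{m} \Big( \sum_{\beta \in L} M(\alpha,\beta)\ts\mu^{d_j}_t(\beta) \Big)
 \;=\; \sum_{(\lambda_1,\ldots,\lambda_m) \in L^m} \Big( \prod_{j=1}^{m} M(\alpha,\lambda_j) \Big) \bigotimes_{j=1}^{m} \mu^{d_j}_t(\lambda_j).
\end{equation*}

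Next I would note that, because the blocks $d_1,\ldots,d_m$ of $\delta$ are pairwise distinct, the tuples $(\lambda_1,\ldots,\lambda_m)\in L^m$ are in bijection with the labelled partitions $\bdelta \in \LL\AS([n])$ having base $\delta$: the tuple corresponds to $\bdelta = \{(d_1,\lambda_1),\ldots,(d_m,\lambda_m)\}$, and each such $\bdelta$ arises exactly once. Under this identification we have $\bigotimes_{j=1}^m \mu^{d_j}_t(\lambda_j) = \bigotimes_{(d,\lambda)\in\bdelta} \mu^{d}_t(\lambda) = \cR^{}_{\bdelta}(\mu_t)$ by Definition~\ref{labelledrecombinators}, and $\prod_{j=1}^m M(\alpha,\lambda_j) = \prod_{(d,\lambda)\in\bdelta} M(\alpha,\lambda)$. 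Multiplying the displayed identity by $r^{}_\delta$ and summing over $\delta \in \AS([n])$ therefore collapses the double sum (over partitions and their labellings) into a single sum over $\bdelta \in \LL\AS([n])$, in which the coefficient of $\cR^{}_{\bdelta}(\mu_t)$ is precisely $r^{}_\delta \prod_{(d,\lambda)\in\bdelta} M(\alpha,\lambda) = p^{}_{\bdelta}(\alpha)$. Since this holds for every $\alpha$, we obtain~\eqref{mainrewritten} as an identity in $\cP(\cA)^L$.

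For the normalisation I would repeat the same bookkeeping with the measures removed. Summing $p^{}_{\bdelta}(\alpha)$ over all $\bdelta$ with a fixed base $\delta = \{d_1,\ldots,d_m\}$ gives
\begin{equation*}
\sum_{(\lambda_1,\ldots,\lambda_m)\in L^m} r^{}_\delta \prod_{j=1}^m M(\alpha,\lambda_j)
 \;=\; r^{}_\delta \prod_{j=1}^m \Big( \sum_{\lambda \in L} M(\alpha,\lambda) \Big)
 \;=\; r^{}_\delta,
\end{equation*}
using that $M$ is row-stochastic. Summing over $\delta \in \AS([n])$ and invoking $\sum_{\delta} r^{}_\delta = 1$ yields $\sum_{\bdelta} p^{}_{\bdelta}(\alpha) = 1$, as claimed. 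As a sanity check, this also shows that the right-hand side of~\eqref{mainrewritten} is, componentwise, a genuine convex combination of probability measures, hence lies in $\cP(\cA)^L$.

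I do not expect a real obstacle; the only step that needs a moment's care is the reindexing in the middle paragraph --- verifying that summing over pairs consisting of a partition together with a labelling of its blocks is literally the same as summing over $\LL\AS([n])$. This relies on the blocks of a partition being distinct, so that distinct label assignments yield distinct labelled partitions and nothing is double-counted. Everything else is multilinearity of the product measure together with the stochasticity of $M$.
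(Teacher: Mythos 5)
Your proposal is correct and follows essentially the same route as the paper's proof: expand the tensor product over the migration sum by multilinearity, identify the double sum over partitions and label vectors with the single sum over labelled partitions, and obtain the normalisation from the row-stochasticity of $M$ together with $\sum_\delta r_\delta^{}=1$. Your extra remark on the bijection (distinctness of the blocks preventing double-counting) is a fine point the paper passes over silently, but it does not change the argument.
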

\begin{proof}
This follows immediately from Definition~\ref{labelledrecombinators} by expanding the measure product in Eq.~\eqref{recomig_ode}:
\begin{equation*}
\begin{split}
\mu_{t+1}(\alpha) & = \sum_{\delta \in \AS({[n]})} r_\delta^{} \bigotimes_{d \in \delta}  \sum_{\lambda \in L} M(\alpha,\lambda) \mu^d_t(\lambda) = \sum_{\delta \in \AS({[n]})} \sum_{\blambda \in L^\delta} r_\delta^{} \prod_{d \in \delta} M \big (\alpha, \lambda_d \big ) \bigotimes_{d \in \delta}  \mu_t^d (\lambda_d )\\[2mm]
& = \sum_{\bdelta \in \LL\AS({[n]})} p_\bdelta^{} (\alpha) \bigotimes_{(d,\lambda) \in \bdelta} \mu_t^d (\lambda) = \sum_{\bdelta \in \LL\AS({[n]})} p_\bdelta^{} (\alpha) \cR_\bdelta^{}(\mu_t),
\end{split}
\end{equation*}
where, in the third step, we identified the double sum over all partitions of ${[n]}$ and all possible vectors of labels of their blocks with the sum over all labelled partitions. 
The normalisation is a consequence of $\sum_{\delta \in \AS({[n]})} r_\delta^{} = 1 =\sum_{\beta \in L} M(\alpha,\beta) $.
\end{proof}

We call the probability distribution $p(\alpha) = \big ( p_\bdelta(\alpha) \big )_{\bdelta \in \LL \SSS ([n])}$ the \emph{migration-recombination distribution at $\alpha$}. 
\begin{remark}\label{babyduality}
Lemma~\ref{zusammenfassen} has a simple stochastic interpretation. To sample the type of an individual in generation $t+1$ (say, at location $\alpha$), we first pick a random labelled partition $\bdelta$ according to $p^{}(\alpha)$ and subsequently sample from $\cR^{}_\bdelta(\mu_t)$. 
The factorisation of $p_\bdelta^{}(\alpha)$ in Lemma~\ref{zusammenfassen} implies that the genome is first partitioned across its parents according to $\delta$, with probability $r_\delta$. Subsequently, the label is reassigned (conditionally) independently for each block, according to $M(\alpha,\bcdot)$, as we trace back the origin of each ancestor. Finally, the offspring type is determined by piecing together (fragments of) independent samples of the ancestral sequences at the appropriate locations, in generation $t$. This leads to the product measure in Definition~\ref{labelledrecombinators}. 
We will further elaborate on this in Section~\ref{sec:stochastic}.   
\hfill $\diamondsuit$
\end{remark}

To continue, we need a few additional concepts around labelled partitions.
First, the notion of an \emph{induced \textnormal{(}labelled\textnormal{)} partition} is required. For $\varnothing \neq V \subseteq U$ and $\bdelta \in \LL \AS (U)$, we denote by $\bdelta|_{V}$ the labelled partition induced by $\bdelta$ on $V$; it is given by
\[
  \bdelta|_{V} \defeq \{ (d \cap V, \lambda) : d \cap V \neq \varnothing, (d,\lambda) \in \bdelta\}
\]
with base
\begin{equation*}
\delta|_{V} = \{ d \cap V : \varnothing \neq d \cap V,\, d \in \delta  \},
\end{equation*}
the partition induced by the (unlabelled) partition $\delta$ on $V$. Simply put, every block inherits the label of the unique block of the original partition that contains it. 

Conversely, given a partition $\delta$ of $U$ and a family $(\bepsilon_d)_{d \in \delta}$ of labelled partitions of its blocks, their union
\begin{equation*}
\bigcup_{d \in \delta} \bepsilon_d
\end{equation*}
is a labelled partition of $U$; its base is the union
\begin{equation*}
\bigcup_{d \in \delta} \varepsilon_d
\end{equation*}
of the bases $\varepsilon_d$.

Finally, given two labelled partitions $\bdelta$ and $\bepsilon$, we say that $\bepsilon$ \emph{is finer than} $\bdelta$ ($\bepsilon \preccurlyeq \bdelta$) if $\varepsilon \preccurlyeq \delta$. The  partial order on $\AS(U)$ thus carries over to a partial order on $\LL \AS (U)$. For any $\alpha \in L$, there is a unique maximal element; namely, the labelled partition $\bpmax_U^\alpha \defeq \{({[n]},\alpha) \}$ that consists of a single block with label $\alpha$. If $U = {[n]}$, we drop the subscript.

\begin{remark}\label{refinementbijection}
It is not difficult to see that $\bepsilon \preccurlyeq \bdelta$ if and only if
\begin{equation*}
\bepsilon = \bigcup_{d \in \delta} \bepsilon|_{d}.
\end{equation*}
For a fixed $\delta \in \AS({[n]})$, this implies the following bijection between the labelled partitions $\bepsilon$ with $\varepsilon \preccurlyeq \delta$ and collections $(\bepsilon_d)_{d \in \delta}$ of labelled partitions of the individual blocks of $\delta$.
Given $\bepsilon$ with $\varepsilon \preccurlyeq \delta$, we obtain the collection $( \bepsilon|_d)_{d \in \delta}$ of labelled partitions induced by $\bepsilon$ on the blocks of $\delta$. Conversely, given a collection $(\bepsilon_d )_{d \in \delta}$ of labelled partitions of the blocks of $\delta$, we set $\bepsilon \defeq \bigcup_{d \in \delta} \bepsilon_d$; note that $\bepsilon \preccurlyeq \bdelta$ and $\bepsilon|_d = \bepsilon_d$. See also Fig.~\ref{fig:inducedjoin}.
\hfill $\diamondsuit$
\end{remark}

\begin{figure}[t]
\psfrag{1}{$1$}
\psfrag{2}{$2$}
\psfrag{3}{$3$}
\psfrag{4}{$4$}
\psfrag{5}{$5$}

\psfrag{d1}{$d_1$}
\psfrag{d2}{$d_2$}
\psfrag{d3}{$d_3$}
\psfrag{d4}{$d_4$}
\psfrag{d5}{$d_5$}
\psfrag{d6}{$d_6$}

\psfrag{ed1}{$\bepsilon^{}_{d_1} = \bepsilon|_{d_1}^{}$}
\psfrag{ed2}{$\bepsilon^{}_{d_2} = \bepsilon|_{d_2}^{}$}
\psfrag{ed3}{$\bepsilon^{}_{d_3} = \bepsilon|_{d_3}^{}$}
\psfrag{ed4}{$\bepsilon^{}_{d_4} = \bepsilon|_{d_4}^{}$}
\psfrag{ed5}{$\bepsilon^{}_{d_5} = \bepsilon|_{d_5}^{}$}
\psfrag{ed6}{$\bepsilon^{}_{d_6} = \bepsilon|_{d_6}^{}$}

\psfrag{delta}{$\delta$}
\psfrag{bepsilon}{$\bepsilon = \bigcup_{j = 1}^6 \bepsilon^{}_{d_j}$}
\includegraphics[width = 0.9\textwidth]{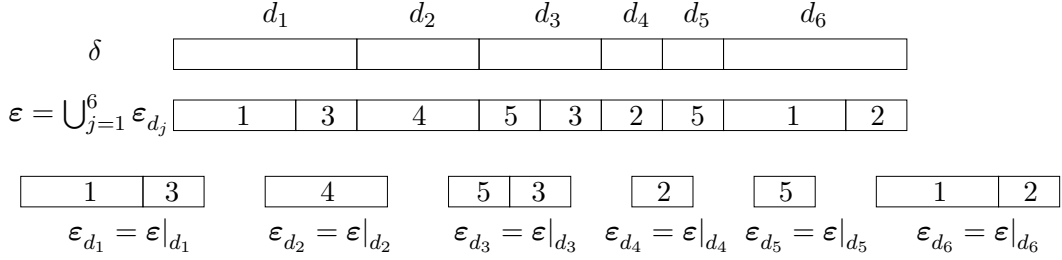}
\caption{\label{fig:inducedjoin}
At the top, an unlabelled partition of $[n]$. In the middle, a labelled refinement of $\delta$, which gives rise to labelled partitions of the blocks of $\delta$ (bottom). Conversely, one can start with the collection of labelled partitions at the bottom and join them to obtain a labelled refinement of $\delta$.
}
\end{figure}

We will now see that the recombinator for a union of labelled partitions of disjoint subsets is the product of the recombinators for the individual labelled partitions; compare also \citet[Lemma 2]{haldane}.
\begin{lemma}\label{recoindcot}
Let $\delta \in \AS({[n]})$ and $\bepsilon_d \in \LL \AS (d)$ for all $d \in \delta$. Then, for all $\nu \in \cP(X)^L$,
\begin{equation*}
\cR_{\bigcup_{d \in \delta} \bepsilon_d }^{} (\nu) = \bigotimes_{d \in \delta} \cR_{\bepsilon_d}^{d}(\nu^{d}).
\end{equation*}
In particular, for $\bepsilon \in \LL \AS({[n]})$ with $\varepsilon \preccurlyeq \delta$, we have 
\begin{equation*}
\cR_{\bepsilon}^{} (\nu) = \bigotimes_{d \in \delta} \cR_{\bepsilon|_d}^{d}(\nu^{d}).
\end{equation*}
\end{lemma}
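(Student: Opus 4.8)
The plan is to unwind both sides of the claimed identity using the definitions of the recombinator (Definition~\ref{labelledrecombinators}) and of marginalisation, and check that they produce the same probability measure on $\cA_{[n]}$. The key observation is that a block of $\bigcup_{d \in \delta} \bepsilon_d$ is exactly a block $(b, \lambda)$ of some $\bepsilon_d$, and since the $d \in \delta$ partition $[n]$, such a $b$ satisfies $b \subseteq d$ for a \emph{unique} $d$. Hence the product over all labelled blocks of $\bigcup_{d \in \delta} \bepsilon_d$ factors as an iterated product, first over $d \in \delta$ and then over $(b,\lambda) \in \bepsilon_d$.

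\medskip

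\textbf{Step 1 (re-indexing the product).} Starting from
\begin{equation*}
\cR_{\bigcup_{d \in \delta} \bepsilon_d}^{}(\nu) = \bigotimes_{(b,\lambda) \in \bigcup_{d \in \delta} \bepsilon_d} \nu^{b}(\lambda),
\end{equation*}
I would use the disjointness of the bases $\varepsilon_d$ (which holds because the $d$ are disjoint and each block of $\varepsilon_d$ is contained in $d$) to rewrite the single tensor product as the nested product $\bigotimes_{d \in \delta} \bigotimes_{(b,\lambda) \in \bepsilon_d} \nu^{b}(\lambda)$. Here one must be mildly careful that the tensor product respects the site ordering, as emphasised after Definition~\ref{labelledrecombinators}; since the blocks of $\bepsilon_d$ are contained in $d$ and the $d$'s are the blocks of $\delta$, grouping the factors by $d$ and then reading off sites within each $d$ recovers the global ordering, so the regrouping is legitimate.

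\medskip

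\textbf{Step 2 (identifying the marginals).} The remaining point is that for $b \subseteq d$ one has $\nu^{b} = (\nu^{d})^{b}$, i.e. marginalising $\nu$ directly to $b$ equals first marginalising to $d$ and then to $b$; this is the transitivity of push-forwards under the canonical projections $\cA_{[n]} \to \cA_d \to \cA_b$, and it holds coordinatewise in the location $\lambda$ since marginalisation is applied componentwise to the vector $\nu \in \cP(\cA_{[n]})^L$. Substituting this in, the inner product $\bigotimes_{(b,\lambda) \in \bepsilon_d} \nu^{b}(\lambda) = \bigotimes_{(b,\lambda) \in \bepsilon_d} (\nu^{d})^{b}(\lambda)$ is, by Definition~\ref{labelledrecombinators} applied on the ground set $d$, exactly $\cR_{\bepsilon_d}^{d}(\nu^{d})$. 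This yields the first displayed identity. The second identity is then immediate: given $\bepsilon \preccurlyeq \delta$, Remark~\ref{refinementbijection} gives $\bepsilon = \bigcup_{d \in \delta} \bepsilon|_d$ with $\bepsilon|_d \in \LL\AS(d)$, so applying the first part with $\bepsilon_d = \bepsilon|_d$ finishes the proof.

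\medskip

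\textbf{The main obstacle} is bookkeeping rather than mathematics: one has to make the re-indexing of the tensor product over labelled blocks fully rigorous and, in particular, verify that it is compatible with the site-ordering convention built into the definition of $\cR$. Once it is clear that the blocks of $\bigcup_{d \in \delta}\bepsilon_d$ are partitioned, according to which $d$ contains them, into the families $\bepsilon_d$, everything else is the transitivity of marginalisation, applied component-by-component over $L$.
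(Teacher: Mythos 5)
Your proposal is correct and follows essentially the same route as the paper's proof: unwinding the recombinators via Definition~\ref{labelledrecombinators}, regrouping the tensor product over the blocks of $\bigcup_{d\in\delta}\bepsilon_d$ by the blocks of $\delta$ (using transitivity of marginalisation, $\nu^b=(\nu^d)^b$ for $b\subseteq d$, which the paper leaves implicit), and deducing the second identity from Remark~\ref{refinementbijection}. Your additional remarks on site ordering and componentwise marginalisation over $L$ are just a more explicit rendering of the same argument.
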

\begin{proof}
For the first claim, we write out the labelled recombinators and see that
\begin{equation*}
\bigotimes_{d \in \delta} \cR_{\bepsilon_d}^d (\nu^d) = \bigotimes_{d \in \delta} \bigotimes_{(e,\lambda) \in \bepsilon_d} \nu^e  ( \lambda  ) = 
\bigotimes_{(e,\lambda) \in \bigcup_{d \in \delta} \bepsilon_d} \nu^e  ( \lambda ) = \cR_{\bigcup_{d \in \delta} \bepsilon_d }^{}(\nu).
\end{equation*}
For the second claim, see Remark~\ref{refinementbijection}.
\end{proof}

We now turn to the marginalisation consistency of the MRE~\eqref{recomig_ode}, a property that will turn out as the key to its solution. 

\section{Marginalisation consistency}
\label{sec:marginal}
Just as in the continuous-time case for pure recombination treated by~\citet{haldane}, the marginalisation consistency of the model is a crucial ingredient. We therefore now turn to  the dynamics that the MRE~\eqref{recomig_ode} induces on the marginal type distributions. 
As a warm-up, we prove the following elementary, but useful, result. 
\begin{lemma}\label{productmarginalisation}
Let $U,V \subseteq {[n]}$, $U \cap V = \varnothing$, and let $\nu_U^{}, \nu_V^{}$ be probability measures on $\cA_U$ and $\cA_V$, respectively. Then, we have for any $W \subseteq U \cup V$
\begin{equation*}
(\nu_U^{} \otimes \nu_V^{})^W = \nu_U^{U \cap W} \! \otimes \nu_V^{V \cap W}.
\end{equation*}
\end{lemma}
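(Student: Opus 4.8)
The plan is to verify the identity by checking that both sides agree on all rectangles (product sets) $E_U \times E_V$ with $E_U \subseteq \cA_{U \cap W}$ and $E_V \subseteq \cA_{V \cap W}$, since such sets generate all subsets of $\cA_W = \cA_{U\cap W} \times \cA_{V \cap W}$ (here I use $W \subseteq U \cup V$ together with $U \cap V = \varnothing$ to split $W$ as the disjoint union $(U \cap W) \sqcup (V \cap W)$, so that $\cA_W$ factors accordingly). Because everything in sight is a finite set, it suffices to take $E_U = \{a\}$ and $E_V = \{b\}$ singletons, but working with general rectangles makes the bookkeeping cleaner and avoids invoking additivity.

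First I would unwind the left-hand side. By the definition of marginal distribution, $(\nu_U^{} \otimes \nu_V^{})^W(E_U \times E_V)$ equals $(\nu_U^{} \otimes \nu_V^{})\bigl((E_U \times E_V) \times \cA_{(U \cup V) \setminus W}\bigr)$. The key combinatorial step is to re-index the coordinates: using $U \cap V = \varnothing$ and $W \subseteq U \cup V$, the set $(U \cup V) \setminus W$ is the disjoint union of $U \setminus W = U \setminus (U \cap W)$ and $V \setminus W = V \setminus (V \cap W)$, and after permuting coordinates the preimage $(E_U \times E_V) \times \cA_{(U\cup V)\setminus W}$ becomes exactly the rectangle $\bigl(E_U \times \cA_{U \setminus W}\bigr) \times \bigl(E_V \times \cA_{V \setminus W}\bigr)$ inside $\cA_U \times \cA_V$. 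Since a product measure evaluated on a rectangle is the product of the measures of the sides, this equals $\nu_U^{}\bigl(E_U \times \cA_{U \setminus (U \cap W)}\bigr) \cdot \nu_V^{}\bigl(E_V \times \cA_{V \setminus (V \cap W)}\bigr)$, which by the definition of marginalisation is $\nu_U^{U \cap W}(E_U) \cdot \nu_V^{V \cap W}(E_V)$. On the other hand, the right-hand side $\nu_U^{U \cap W} \otimes \nu_V^{V \cap W}$ evaluated on the rectangle $E_U \times E_V$ is by definition of the product measure exactly $\nu_U^{U \cap W}(E_U) \cdot \nu_V^{V \cap W}(E_V)$. The two sides coincide, and since rectangles generate, equality of the measures follows.

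The only genuine subtlety — hardly an obstacle, but the one point requiring care — is the identification of index sets and the accompanying coordinate permutation: one must be explicit that $W = (W \cap U) \sqcup (W \cap V)$ and $(U \cup V)\setminus W = (U \setminus W) \sqcup (V \setminus W)$ as disjoint unions (both using $U \cap V = \varnothing$), and that the canonical projections and the product structure of $\cA$ respect these decompositions up to reordering of the factors, which is harmless since we are working with measures on finite sets and the ordering convention is purely cosmetic. Everything else is a direct application of the definitions of marginal distribution and product measure. I would present this as a short computation on a single rectangle $E_U \times E_V$, remark that these generate, and conclude.
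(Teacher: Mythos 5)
Your proposal is correct and follows essentially the same route as the paper: both evaluate the two measures on rectangles $E_{U \cap W} \times E_{V \cap W}$, regroup the coordinates of the preimage as $(E_{U\cap W} \times \cA_{U \setminus W}) \times (E_{V \cap W} \times \cA_{V \setminus W})$ using $U \cap V = \varnothing$, and factorise the product measure to recover $\nu_U^{U\cap W}(E_{U\cap W})\,\nu_V^{V\cap W}(E_{V\cap W})$. Your extra remarks on the coordinate reordering and on rectangles (or singletons, in the finite setting) determining the measure are harmless elaborations of what the paper leaves implicit.
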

\begin{proof}
Note that $\cA_W = \cA_{U \cap W} \times \cA_{V \cap W}$. Let us fix $E_{U \cap W} \subseteq \cA_{U \cap W}$ and $E_{V \cap W} \subseteq \cA_{V \cap W}$. Then, for any $W \subseteq U \cup V$,
\begin{equation*}
\begin{split}
(\nu_U^{} \otimes \nu_V^{})^W (E_{U \cap W}^{} \times E_{V \cap W}^{})  &= 
(\nu_{U}^{} \otimes \nu_V^{}) (E_{U \cap W}^{} \times E_{V \cap W}^{} \times \cA_{(U \cup V) \setminus W}^{}) \\&=
(\nu_U^{} \otimes \nu_V^{}) \big ( (E_{U \cap W}^{} \times \cA_{U \setminus W}^{}) \times (E_{V \cap W}^{} \times \cA_{V \setminus W}^{} )\big ) \\&=
\nu_U^{}(E_{U \cap W}^{} \times \cA_{U \setminus W}^{}) \nu_V^{} (E_{V \cap W}^{} \times \cA_{V \setminus W}^{}) \\&=
\nu_U^{U \cap W} (E_{U \cap W}^{}) \nu_V^{V \cap W} (E_{V \cap W}^{}).
\end{split}
\end{equation*}
\end{proof}

\begin{remark}\label{emptyprojection}
It is important to note that Lemma~\ref{productmarginalisation} remains true if $U \cap W = \varnothing$ or $V \cap W = \varnothing$. Assume, for instance, that $U \cap W = \varnothing$. Recalling that the empty Cartesian product $\cA_\varnothing^{}$ is the singleton $\{e\}$ (recall that $e$ is the empty sequence), $\nu_U^{U \cap W}$ is the unique probability measure on $\{e\}$ and can be treated as the scalar 1, in the sense that
\begin{equation*}
\nu_U^{U \cap W} \otimes \nu_V^{V \cap W} = \nu_V^{V \cap W} \otimes \nu_U^{U \cap W} =  \nu_V^{V \cap W}.
\end{equation*}
\hfill $\diamondsuit$
\end{remark}

We now prove the main result of this section, which shows that the MRE is consistent  under marginalisation. 
\begin{theorem}\label{marginalisation}
Let $(\mu_t)_{t \in \NN_0}^{}$ be a solution of the \textnormal{MRE}~\eqref{recomig_ode} and $U$ a nonempty subset of ${[n]}$. Then, $(\mu_t^U)_{t \in \NN_0}^{}$ satisfies the \emph{marginal MRE}
\begin{equation*}
\mu_{t+1}^U =  \sum_{\bdelta \in \LL\SSS(U)} p^{U}_{\bdelta} \cR^U_{\bdelta} (\mu^U_t),
\end{equation*}
where $p^U_\bdelta$
is given by
\begin{equation*}
p^U_{\bdelta} \defeq \sum_{\substack{\bdelta' \in \LL \AS({[n]}) \\ \bdelta'|_{U} = \bdelta}} p_{\bdelta'}^{} \text{ for } \bdelta \in \LL\SSS(U).
\end{equation*}
\end{theorem}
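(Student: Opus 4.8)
The plan is to start from the MRE in the compact form of Lemma~\ref{zusammenfassen}, namely $\mu_{t+1} = \sum_{\bdelta' \in \LL\SSS([n])} p^{}_{\bdelta'} \cR^{}_{\bdelta'}(\mu_t)$, and apply the marginalisation operator $\nu \mapsto \nu^U$ to both sides, componentwise in $\alpha \in L$. Since marginalisation is linear, the right-hand side becomes $\sum_{\bdelta' \in \LL\SSS([n])} p^{}_{\bdelta'}(\alpha) \, \big(\cR^{}_{\bdelta'}(\mu_t)\big)^U$, so everything reduces to identifying $\big(\cR^{}_{\bdelta'}(\mu_t)\big)^U$ and then reorganising the sum over $\bdelta'$ according to the value of $\bdelta'|_U$.

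The key computation is the identity
\begin{equation*}
\big(\cR^{}_{\bdelta'}(\nu)\big)^U = \cR^{U}_{\bdelta'|_U}(\nu^U)
\end{equation*}
for any $\bdelta' \in \LL\SSS([n])$ and $\nu \in \cP(\cA)^L$. To see this, write $\cR^{}_{\bdelta'}(\nu) = \bigotimes_{(d,\lambda) \in \bdelta'} \nu^d(\lambda)$ and marginalise the product measure to $U$. Iterating Lemma~\ref{productmarginalisation} (together with Remark~\ref{emptyprojection} to handle the blocks $d$ with $d \cap U = \varnothing$, which simply drop out) turns the marginal of the product into the product of the marginals: $\big(\bigotimes_{(d,\lambda) \in \bdelta'} \nu^d(\lambda)\big)^U = \bigotimes_{(d,\lambda)\in\bdelta',\, d\cap U \neq \varnothing} \big(\nu^d(\lambda)\big)^{d \cap U}$. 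Now $\big(\nu^d(\lambda)\big)^{d \cap U} = \nu^{d\cap U}(\lambda)$ by transitivity of marginalisation (the push-forward under the projection $\cA \to \cA_d \to \cA_{d\cap U}$ equals the push-forward under $\cA \to \cA_{d \cap U}$), and the blocks $d \cap U$ with label $\lambda$, ranging over those $(d,\lambda) \in \bdelta'$ with $d \cap U \neq \varnothing$, are by definition exactly the labelled blocks of $\bdelta'|_U$. Hence the product is $\bigotimes_{(e,\lambda)\in\bdelta'|_U} \nu^e(\lambda) = \cR^U_{\bdelta'|_U}(\nu^U)$, as claimed. (One could alternatively invoke Lemma~\ref{recoindcot}, but the direct marginalisation argument seems cleanest here.)

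With this in hand, substitute back and swap the order of summation, grouping the $\bdelta' \in \LL\SSS([n])$ by the induced partition $\bdelta \defeq \bdelta'|_U \in \LL\SSS(U)$:
\begin{equation*}
\mu_{t+1}^U(\alpha) = \sum_{\bdelta' \in \LL\SSS([n])} p^{}_{\bdelta'}(\alpha)\, \cR^U_{\bdelta'|_U}(\mu^U_t) = \sum_{\bdelta \in \LL\SSS(U)} \Big( \sum_{\substack{\bdelta' \in \LL\SSS([n]) \\ \bdelta'|_U = \bdelta}} p^{}_{\bdelta'}(\alpha) \Big) \cR^U_{\bdelta}(\mu^U_t) = \sum_{\bdelta \in \LL\SSS(U)} p^U_\bdelta(\alpha)\, \cR^U_\bdelta(\mu^U_t),
\end{equation*}
which is precisely the marginal MRE with $p^U_\bdelta$ as defined in the statement; the regrouping is legitimate because $\bdelta' \mapsto \bdelta'|_U$ is a well-defined (surjective) map $\LL\SSS([n]) \to \LL\SSS(U)$, so its fibres partition the index set. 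Reassembling the $L$-indexed components gives the claimed vector identity. I expect the main obstacle to be purely bookkeeping: carefully tracking the blocks that meet $U$ versus those that do not when marginalising the tensor product, and making sure the edge cases (some $d \cap U = \varnothing$, or even $\bdelta'|_U = \bpmin_U$ when $U$ is a singleton) are covered by Remark~\ref{emptyprojection} rather than producing spurious empty-sequence factors. No genuinely hard analytic step is involved; the content is entirely in the compatibility of marginalisation with tensor products and with block refinement.
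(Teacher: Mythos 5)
Your argument is correct and coincides with the paper's own proof: both apply the linearity of marginalisation to the compact form of Lemma~\ref{zusammenfassen}, establish $\bigl(\cR_{\bdelta'}(\mu_t)\bigr)^U = \cR^U_{\bdelta'|_U}(\mu_t^U)$ via Lemma~\ref{productmarginalisation} and Remark~\ref{emptyprojection}, and then regroup the sum over $\bdelta'$ according to the fibres of $\bdelta' \mapsto \bdelta'|_U$. No gaps; your explicit remark on the transitivity of marginalisation is a minor elaboration the paper leaves implicit.
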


\begin{proof}
By Lemma~\ref{zusammenfassen} and the linearity of marginalisation, we have 
\begin{equation*}
\mu_{t+1}^U = \bigg (\sum_{\bdelta' \in \LL\AS({[n]})} p_{\bdelta'}^{} \cR_{\bdelta'} (\mu_t) \bigg )^U = \sum_{\bdelta' \in \LL\AS({[n]})} p_{\bdelta'}^{} \big ( \cR_{\bdelta'}(\mu_t) \big )^U. 
\end{equation*}
Using Lemma~\ref{productmarginalisation}, we obtain for all $\bdelta' \in \LL \AS ({[n]})$
\begin{equation*}
\big ( \cR_{\bdelta'}(\mu_t) \big )^U = \bigg ( \bigotimes_{(d',\lambda') \in   \bdelta'} \mu_t^{d'}  (\lambda') \bigg )^U 
= \bigotimes_{\substack{(d',\lambda') \in \bdelta' \\ d' \cap U \neq \varnothing}} \mu_t^{d' \cap U} (\lambda' ) = 
 \bigotimes_{(d,\lambda) \in \bdelta'|_{U}} \mu_t^d (\lambda ) = \cR^U_{\bdelta'|_U^{}} (\mu_t^U),
\end{equation*}
where, in the second step, we ignored  the factors corresponding to $d'$ with $d' \cap U = \varnothing$ (compare Remark~\ref{emptyprojection}).
Thus,
\begin{equation*}
\mu_{t+1}^U = \sum_{\bdelta' \in \LL \AS({[n]})} p_{\bdelta'}^{} \cR_{\bdelta'|_U}^U(\mu_t^U) = \sum_{\bdelta \in \LL \AS(U)} p_{\bdelta}^U \cR^U_{\bdelta}(\mu_t^U),
\end{equation*}
which is what we wanted to show.
\end{proof}
The $p^U_\bdelta (\alpha)$  are the \emph{marginal} migration-recombination probabilities (at $\alpha$), and, accordingly, $p^U (\alpha) = \big (p^U_\bdelta(\alpha) \big )_{\bdelta \in \LL \SSS([n])}$ is called the marginal migration-recombination \emph{distribution} (at $\alpha$). We will now see that the marginal migration-recombination probabilities have a product structure analogous to that of the migration-recombination probabilities in Lemma~\ref{zusammenfassen}.

\begin{lemma}\label{factorisation_marginal}
The marginal migration-recombination probabilities $p_\bdelta^{U}(\alpha)$ from Theorem~\textnormal{\ref{marginalisation}} can be written as
\begin{equation*}
p^{U}_{\bdelta}(\alpha) = \Bigg (\sum_{\substack{\delta' \in \AS({[n]}) \\ \delta'|_U = \delta}} r_{\delta'}^{} \Bigg) \prod_{(d,\lambda) \in \bdelta} M  ({\alpha, \lambda}).
\end{equation*}
\end{lemma}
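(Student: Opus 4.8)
The plan is to start from the definition of $p^U_\bdelta(\alpha)$ given in Theorem~\ref{marginalisation}, namely $p^U_\bdelta(\alpha) = \sum_{\bdelta' : \bdelta'|_U = \bdelta} p_{\bdelta'}^{}(\alpha)$, and substitute the explicit factorisation of $p_{\bdelta'}^{}(\alpha) = r_{\delta'}^{} \prod_{(d',\lambda') \in \bdelta'} M(\alpha,\lambda')$ from Lemma~\ref{zusammenfassen}. The sum over labelled partitions $\bdelta'$ with $\bdelta'|_U = \bdelta$ then needs to be reorganised into a sum over the underlying base partitions $\delta'$ with $\delta'|_U = \delta$, together with an inner sum over the possible label assignments; the goal is to show that the inner sum over labels factorises and collapses, leaving exactly $\prod_{(d,\lambda) \in \bdelta} M(\alpha,\lambda)$ as a common factor, so that the outer sum over $\delta'$ only touches the $r_{\delta'}^{}$ terms.

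The key observation to make precise is the bookkeeping of labels. Fix a base partition $\delta'$ of $[n]$ with $\delta'|_U = \delta$. Each block $d' \in \delta'$ with $d' \cap U \neq \varnothing$ maps to the block $d' \cap U$ of $\delta$; each block $d'$ with $d' \cap U = \varnothing$ is ``lost'' under induction. For a labelled partition $\bdelta'$ over $\delta'$ to satisfy $\bdelta'|_U = \bdelta$, the label $\lambda'$ carried by a block $d'$ with $d' \cap U \neq \varnothing$ must be exactly the label $\lambda$ that $\bdelta$ assigns to $d' \cap U$ — so these labels are forced, contributing $\prod_{(d,\lambda) \in \bdelta} M(\alpha,\lambda)$ and nothing more. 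By contrast, the labels $\lambda'$ on the blocks $d'$ with $d' \cap U = \varnothing$ are completely free to range over $L$, and each such free block contributes a factor $\sum_{\lambda' \in L} M(\alpha,\lambda') = 1$ since $M$ is stochastic. Hence the entire inner sum over $\bdelta'$ with fixed base $\delta'$ equals $r_{\delta'}^{} \prod_{(d,\lambda) \in \bdelta} M(\alpha,\lambda)$, independently of how many blocks were lost. Summing over all $\delta'$ with $\delta'|_U = \delta$ then pulls the label product out and yields the claimed formula.

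Concretely I would write, for $\bdelta \in \LL\AS(U)$ with base $\delta$,
\begin{equation*}
p^U_\bdelta(\alpha) = \sum_{\substack{\bdelta' \in \LL\AS([n]) \\ \bdelta'|_U = \bdelta}} r_{\delta'}^{} \prod_{(d',\lambda') \in \bdelta'} M(\alpha,\lambda')
= \sum_{\substack{\delta' \in \AS([n]) \\ \delta'|_U = \delta}} r_{\delta'}^{} \sum_{\substack{\bdelta' \text{ over } \delta' \\ \bdelta'|_U = \bdelta}} \prod_{(d',\lambda') \in \bdelta'} M(\alpha,\lambda'),
\end{equation*}
and then evaluate the innermost sum by splitting the product over blocks of $\delta'$ into those meeting $U$ (labels forced) and those disjoint from $U$ (labels free), giving $\prod_{(d,\lambda) \in \bdelta} M(\alpha,\lambda) \cdot \prod_{d' \cap U = \varnothing} \big(\sum_{\lambda' \in L} M(\alpha,\lambda')\big) = \prod_{(d,\lambda) \in \bdelta} M(\alpha,\lambda)$.

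The only genuinely delicate point — the ``main obstacle'', though it is more of a care-point than a real difficulty — is justifying that the correspondence between $\bdelta'$ (with base $\delta'$ satisfying $\delta'|_U = \delta$ and $\bdelta'|_U = \bdelta$) and tuples consisting of a choice of label in $L$ for each block of $\delta'$ disjoint from $U$ is a genuine bijection: one must check that the labels on the blocks meeting $U$ are not merely constrained but \emph{uniquely determined} by $\bdelta$, which follows because induction on $U$ sends the block $d'$ to the unique block $d' \cap U$ of $\delta$ and copies its label, so $\lambda' = \lambda$ where $(d' \cap U, \lambda) \in \bdelta$. Everything else is the routine reindexing already used in the proof of Theorem~\ref{marginalisation} together with the stochasticity of $M$, so the argument is short.
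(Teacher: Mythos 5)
Your proposal is correct and follows essentially the same route as the paper's proof: the paper also substitutes the factorisation from Lemma~\ref{zusammenfassen}, splits the sum over labelled partitions into a sum over base partitions $\delta'$ with $\delta'|_U=\delta$ and an inner sum over label vectors, and then uses indicators to force the labels on blocks meeting $U$ while the labels on blocks disjoint from $U$ sum to $1$ by stochasticity of $M$. Your bijection argument is just the explicit form of what the paper encodes via the indicator functions $\one_{\lambda_j'=\lambda_j}$, so there is no substantive difference.
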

\begin{proof}
We write the (given) labelled partition $\bdelta$ as
\begin{equation*}
\bdelta = \{(d_1,\lambda_1),\ldots,(d_k,\lambda_k) \}.
\end{equation*}
Next, we split the conditional sum over the labelled partitions into the sums over the appropriate partitions and their labels. Thus,
\begin{equation}\label{eafe}
p_\bdelta^U(\alpha) = \sum_{\substack{\bdelta' \in \LL \SSS({[n]}) \\ \bdelta'|_{U} = \bdelta}} p_{\bdelta'}^{}(\alpha)  =
\sum_{\substack{ \delta' =  \{d_1',\ldots,d_m' \} \in \AS({[n]}) \\ \{d_1',\ldots,d_m' \}|_U = \delta}} r^{}_{\delta'} \sum_{\lambda_1',\ldots,\lambda_m' \in L} \prod_{j = 1}^k \one_{\lambda_j' = \lambda_j} \prod_{j = 1}^m M(\alpha,\lambda_j'),
\end{equation}
where the blocks are indexed so that $d_j' \cap U = d_j$ for all $1 \leqslant j \leqslant k$ and $d_j' \cap U = \varnothing$ for $k + 1 \leqslant j \leqslant m$. Clearly, 
\begin{equation*}
\begin{split}
\sum_{\lambda_1',\ldots,\lambda_m' \in L} \prod_{j = 1}^k \one_{\lambda_j' = \lambda_j} & \prod_{j = 1}^m M(\alpha,\lambda_j') \\ &=
\bigg ( \sum_{\lambda_1', \ldots, \lambda_k'}  \prod_{j = 1}^k \one_{\lambda_j' = \lambda_j} \prod_{j = 1}^k M(\alpha,\lambda_j') \bigg ) \bigg (\sum_{\lambda_{k + 1}', \ldots, \lambda_{m}'}  \prod_{j = k+1}^m M(\alpha,\lambda_j') \bigg ),
\end{split}
\end{equation*}
with the usual convention that the empty product is 1.
Now, we can use the indicator in the first bracket to eliminate the summation, yielding
\begin{equation*}
 \sum_{\lambda_1', \ldots, \lambda_k'}  \prod_{j = 1}^k \one_{\lambda_j' = \lambda_j} \prod_{j = 1}^k M(\alpha,\lambda_j')  = \prod_{j = 1}^k M(\alpha,\lambda_j).
\end{equation*}
The second bracket is equal to one, by the stochasticity of $M$:
\begin{equation*}
\sum_{\lambda_{k + 1}', \ldots, \lambda_{m}'}  \prod_{j = k+1}^m M(\alpha,\lambda_j') = \prod_{j = k + 1}^m \sum_{\lambda' \in L} M(\alpha,\lambda')  = 1.
\end{equation*}
Inserting this back into \eqref{eafe} finishes the proof.
\end{proof}

\begin{remark}\label{onestepancestrylabelled}
The same stochastic interpretation as for Eq.~\eqref{mainrewritten} (see Remark~\ref{babyduality}) holds also for the marginalised system.  With probability 
\begin{equation}\label{marginalrecombinationprobabilities}
r^U_\delta \defeq \sum_{\substack{\delta' \in \AS({[n]}) \\ \delta'|_U = \delta}} r_\delta^{},
\end{equation}
the subsequence with respect to $U$ of a sampled individual is partitioned across its ancestors according to $\delta$. Then, the labels are reassigned independently according to $M$, reflecting their independent migration.  
\hfill $\diamondsuit$
\end{remark}

\section{Solution of the forward iteration}
\label{sec:forwardsolution}

Next, we use the marginalisation consistency 
established in the previous section to tame the MRE~\eqref{recomig_ode}. As discussed by \citet{haldane} for pure recombination, 
the main idea is to consider the time evolution of the (column) vector 
$\cR^{} (\mu_t) = \big (\cR^{}_\bdelta(\mu_t) \big )_{\bdelta \in \LL \SSS([n])}$, 
rather than  $\mu_{t}$ alone; note that we recover $\mu_t(\alpha)$ as the $\bpmax^\alpha$-component of $\cR^{} (\mu_t)$.

\begin{theorem}
\label{linearisation}
Let $T$ be the matrix, indexed by the labelled partitions $\LL \SSS({[n]})$, with entries
\begin{equation*}
T_{\bdelta \bepsilon} = \left \{ \begin{array}{ll}
0, & \text{if } \bepsilon \not \preccurlyeq \bdelta, \\
\prod_{(d,\lambda) \in \bdelta} p_{\bepsilon_{|d}}^d  (\lambda), & \text{if } 
\bepsilon \preccurlyeq \bdelta,
\end{array} \right . 
\end{equation*}
where the $p_{\bepsilon_{|d}}^d  (\lambda)$ are as in Lemma~\textnormal{\ref{factorisation_marginal}}.
Then, $T$ is a stochastic matrix. Assume that $(\mu_t)_{t \in \NN_0}^{}$ satisfies the \textnormal{MRE}~\eqref{recomig_ode}. Then, $\cR^{}(\mu_t)$ satisfies the linear recursion
\begin{equation*}
\cR^{}(\mu_{t+1}) =  T \cR^{}(\mu_t).
\end{equation*}
In particular,
\begin{equation*}
\cR^{}(\mu_t) =  T^t \cR^{}(\mu_0),
\end{equation*}
where $T^t$ denotes the $t$-th power of $T$.
\end{theorem}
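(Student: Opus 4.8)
The plan is to prove the one-step identity $\cR(\mu_{t+1}) = T\cR(\mu_t)$ componentwise and then iterate; along the way, the stochasticity of $T$ falls out of the same computation. The two structural facts to lean on are the blockwise marginalisation consistency of Theorem~\ref{marginalisation} and the interplay between multilinearity of the tensor product, the recombinator factorisation of Lemma~\ref{recoindcot}, and the refinement bijection of Remark~\ref{refinementbijection}.

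\emph{Stochasticity.} Fix $\bdelta \in \LL\SSS([n])$ with base $\delta$. Nonnegativity of the $T_{\bdelta\bepsilon}$ is immediate, since the $p^d_{\bepsilon|_d}(\lambda)$ are nonnegative. For the row sums I would use Remark~\ref{refinementbijection} to replace the sum over $\bepsilon$ with $\varepsilon \preccurlyeq \delta$ by an independent sum over collections $(\bepsilon_d)_{d\in\delta}$ with $\bepsilon_d\in\LL\SSS(d)$, under the correspondence $\bepsilon|_d = \bepsilon_d$, so that
\[
\sum_{\bepsilon} T_{\bdelta\bepsilon} = \sum_{\varepsilon \preccurlyeq \delta}\prod_{(d,\lambda)\in\bdelta} p^d_{\bepsilon|_d}(\lambda) = \prod_{(d,\lambda)\in\bdelta}\Bigl(\sum_{\bepsilon_d\in\LL\SSS(d)} p^d_{\bepsilon_d}(\lambda)\Bigr) = 1,
\]
where the last equality uses that, for any nonempty $d\subseteq[n]$ and any $\lambda\in L$, the fibres $\{\bdelta'\in\LL\SSS([n]) : \bdelta'|_d = \bepsilon_d\}$ over $\bepsilon_d\in\LL\SSS(d)$ partition $\LL\SSS([n])$; hence $\sum_{\bepsilon_d} p^d_{\bepsilon_d}(\lambda) = \sum_{\bdelta'} p_{\bdelta'}(\lambda) = 1$ by Lemma~\ref{zusammenfassen}.

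\emph{The recursion.} Fix $\bdelta\in\LL\SSS([n])$ with base $\delta$. By Definition~\ref{labelledrecombinators}, $\cR_\bdelta(\mu_{t+1}) = \bigotimes_{(d,\lambda)\in\bdelta}\mu_{t+1}^d(\lambda)$. For each block $d$, Theorem~\ref{marginalisation} applied with $U = d$ gives $\mu_{t+1}^d(\lambda) = \sum_{\bepsilon_d\in\LL\SSS(d)} p^d_{\bepsilon_d}(\lambda)\,\cR^d_{\bepsilon_d}(\mu_t^d)$. Substituting and expanding the tensor product of these finite sums by multilinearity,
\[
\cR_\bdelta(\mu_{t+1}) = \sum_{(\bepsilon_d)_{d\in\delta}}\Bigl(\prod_{(d,\lambda)\in\bdelta} p^d_{\bepsilon_d}(\lambda)\Bigr)\bigotimes_{d\in\delta}\cR^d_{\bepsilon_d}(\mu_t^d),
\]
the sum running over all collections $(\bepsilon_d)_{d\in\delta}$ with $\bepsilon_d\in\LL\SSS(d)$. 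Now Lemma~\ref{recoindcot} identifies $\bigotimes_{d\in\delta}\cR^d_{\bepsilon_d}(\mu_t^d)$ with $\cR_{\bigcup_{d\in\delta}\bepsilon_d}(\mu_t)$, and Remark~\ref{refinementbijection} says $(\bepsilon_d)_{d\in\delta}\mapsto\bigcup_{d\in\delta}\bepsilon_d$ is a bijection onto $\{\bepsilon : \varepsilon\preccurlyeq\delta\}$ with inverse $\bepsilon\mapsto(\bepsilon|_d)_{d\in\delta}$. Reindexing turns the right-hand side into $\sum_{\varepsilon\preccurlyeq\delta}\bigl(\prod_{(d,\lambda)\in\bdelta}p^d_{\bepsilon|_d}(\lambda)\bigr)\cR_\bepsilon(\mu_t) = \sum_{\bepsilon}T_{\bdelta\bepsilon}\,\cR_\bepsilon(\mu_t) = (T\cR(\mu_t))_\bdelta$, which is the claimed recursion. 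The closed form $\cR(\mu_t) = T^t\cR(\mu_0)$ then follows by an immediate induction on $t$.

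I do not expect a genuine obstacle: the argument is essentially bookkeeping. The step requiring the most care is recognising that marginalisation consistency must be invoked \emph{blockwise} — once for each marginal $\mu^d_{t+1}$, $d\in\delta$, viewed as the one-step evolution of $\mu^d_t$ under the marginal MRE for $d$ from Theorem~\ref{marginalisation} — and then dovetailing the multilinear expansion of $\bigotimes_{(d,\lambda)\in\bdelta}$ precisely with the collections-versus-refinements bijection of Remark~\ref{refinementbijection} and the product formula of Lemma~\ref{recoindcot}, so that the emerging coefficient is exactly $T_{\bdelta\bepsilon}$ and no labelled partition is counted twice or missed.
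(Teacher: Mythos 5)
Your proposal is correct and follows essentially the same route as the paper's proof: it expands $\cR_\bdelta(\mu_{t+1})$ blockwise via Theorem~\ref{marginalisation}, uses multilinearity together with Lemma~\ref{recoindcot} and the bijection of Remark~\ref{refinementbijection} to reindex the sum over labelled refinements of $\bdelta$, and reads off the coefficient $T_{\bdelta\bepsilon}$. Your stochasticity argument is just a slightly more explicit version of the paper's one-line observation that each $p^d(\lambda)$ is a probability distribution on $\LL\SSS(d)$.
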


\begin{proof}
By Definition~\ref{labelledrecombinators} and Theorem~\ref{marginalisation},
\begin{equation*}
\cR_\bdelta (\mu_{t+1}) = \bigotimes_{(d,\lambda) \in \bdelta} \sum_{\bepsilon_d \in \LL\SSS(d)} p_{\bepsilon_d}^d (\lambda ) \cR_{\bepsilon_d}^d (\mu_t^d) 
= \sum_{\substack{\bepsilon_d \in \LL \AS (d) \\ \forall d \in \delta}} \Big ( \prod_{(d,\lambda) \in \bdelta} p_{\bepsilon_d}^d  (\lambda) \Big ) \bigotimes_{d \in \delta} \cR_{\bepsilon_d}^d (\mu_t^d).
\end{equation*}
By Remark~\ref{refinementbijection}, the right-hand side is equal to
\begin{equation*}
\sum_{\udo{\bepsilon}  \preccurlyeq \bdelta} \Big ( \prod_{(d,\lambda) \in \bdelta} p_{\bepsilon_{|d}}^d  ( \lambda ) \Big )  \bigotimes_{d \in \delta} \cR_{\bepsilon|_{d}}^d (\mu_t^d) 
= \sum_{\udo{\bepsilon}  \preccurlyeq \bdelta} T_{\bdelta \bepsilon} \cR^{}_{\bepsilon} (\mu_t),
\end{equation*}
where we used Lemma~\ref{recoindcot} and the underdot indicates the summation variable. 
That $T$ is a stochastic matrix is a straightforward consequence of $p^d(\alpha)$ being a probability distribution on $\LL \AS (d)$ for all $d \subseteq {[n]}$ and all $\alpha \in L$.

\end{proof}

We have just witnessed how the solution of a nonlinear system, embedded in a higher dimensional space, turns into the solution of a linear system and may thus be given explicitly, simply via matrix powers. This is an extension of a technique called \emph{Haldane linearisation} \citep{HaleRingwood,haldane,recoreview} to the case with migration. The underlying mechanism can be found in the genealogical structure, which is discussed next.

\section{Stochastic Interpretation, genealogical content, and duality}

\label{sec:stochastic}
Let us now turn to the probabilistic content of Theorem \ref{linearisation}. We will see that the appearance of the stochastic matrix $T$ is no coincidence; rather, it has a natural interpretation as the transition matrix of a Markov chain describing the random genealogy of a single individual.
\begin{definition}\label{lfragprocessdef}
The \emph{labelled partitioning process} (LPP) is a discrete-time Markov chain $\bSigma \defeq \big (\bSigma_t \big )_{t \in \NN_0}$ 
with values in $\LL\SSS({[n]})$ and transition matrix $T$, that is,
\[
\PP(\bSigma_{t+1} = \bepsilon \mid \bSigma_t = \bdelta) = T_{\bdelta \bepsilon} 
\]
for all $\bdelta,\bepsilon \in \LL \AS ({[n]})$. 
\hfill $\diamondsuit$
\end{definition}

In words, $\bSigma_{t+1}$ is constructed from $\bSigma_t$ by independently replacing each labelled block $(d,\lambda) \in \bSigma_t$, with probability $p^d_{\bepsilon_d} (\lambda)$, by the (labelled) blocks of $\bepsilon_d$; see also Fig.~\ref{LPPillustrated}.

The genealogical interpretation of $\bSigma$, started in $\bpmax^{\alpha}$, is as follows. Each labelled block $(d,\lambda)$ of $\bSigma_t$ corresponds to a different ancestor of the individual at present, sampled at location $\alpha$, who lived at location $\lambda$, $t$ generations before the present.
The elements of $d$ are the sequence sites that are inherited from this ancestor. As we look one generation further into the past, $d$ is replaced by the blocks of a labelled partition $\bepsilon_d \in \LL \AS(d)$, which describes how the type of that ancestor is, in turn, pieced together from its parents, alive $t+1$ generations before the present. Note that, now, the labelled partitions of $d$ are relevant rather than those of ${[n]}$. This is because we already know that this ancestor only contributes sites contained in $d$, whence we only need to trace back the ancestry of these sites. (This reflects the marginalisation consistency of the model, compare Remark~\ref{onestepancestrylabelled}).
Furthermore, the various blocks  split independently as the population, in the law of large numbers regime  assumed here, is so large that two given individuals never share a common ancestor; thus, their lineages are conditionally independent. 

The connection between the solution of the MRE~\eqref{recomig_ode} and the genealogical process is formalised in the following theorem, which is a probabilistic restatement of Theorem~\ref{linearisation} and draws on the notion of duality for Markov processes~\citep{liggett,kurtjansen}; in particular, we think about the solution of the forward-time equation as a Markov chain with deterministic transitions.

\begin{theorem}\label{duality}
The \textnormal{LPP} and the solution of the \textnormal{MRE}~\eqref{recomig_ode} are dual with respect to the duality function
\begin{equation*}
(\bdelta,\mu) \mapsto \cR_{\bdelta}^{} (\mu).
\end{equation*}
That is, for all $\bdelta \in \LL\SSS({[n]})$ and all $\mu_0 \in \cP(\cA)^L$, we have
\begin{equation*}
\EE [ \cR_{\bSigma_t}^{}( \mu_0) \mid \bSigma_0 = \bdelta ] = \cR_\bdelta^{}(\mu_t).
\end{equation*}
In particular, this entails the stochastic representation
\begin{equation*}
\mu_t(\alpha) = \EE [ \cR_{\bSigma_t}(\mu_0) \mid \bSigma_0 = \bpmax^\alpha]
\end{equation*}
for the solution of the \textnormal{MRE}~\eqref{recomig_ode}.
\end{theorem}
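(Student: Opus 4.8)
The plan is to derive the duality relation directly from the linear recursion in Theorem~\ref{linearisation}, using the fact that the LPP has transition matrix $T$. First I would observe that the claimed identity $\EE[\cR_{\bSigma_t}^{}(\mu_0) \mid \bSigma_0 = \bdelta] = \cR_\bdelta^{}(\mu_t)$ is, componentwise in $\bdelta$, nothing but the statement that the vector $\big(\EE[\cR_{\bSigma_t}^{}(\mu_0) \mid \bSigma_0 = \bdelta]\big)_{\bdelta}$ equals $\cR^{}(\mu_t) = T^t \cR^{}(\mu_0)$. So the proof reduces to identifying the left-hand vector with $T^t \cR^{}(\mu_0)$.

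Next I would make the elementary Markov-chain observation that, for any function $f$ on $\LL\SSS([n])$ (here $f(\bepsilon) = \cR_\bepsilon^{}(\mu_0)$, a $\cP(\cA)$-valued function, but linearity makes this harmless), one has
\begin{equation*}
\EE[f(\bSigma_t) \mid \bSigma_0 = \bdelta] = \sum_{\bepsilon \in \LL\SSS([n])} (T^t)_{\bdelta\bepsilon} \, f(\bepsilon) = \big(T^t f\big)_{\bdelta},
\end{equation*}
which is just the defining property of the $t$-step transition matrix together with $\PP(\bSigma_t = \bepsilon \mid \bSigma_0 = \bdelta) = (T^t)_{\bdelta\bepsilon}$ (itself the Chapman–Kolmogorov identity for the chain of Definition~\ref{lfragprocessdef}). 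Applying this with $f = \cR^{}(\mu_0)$ gives $\EE[\cR_{\bSigma_t}^{}(\mu_0) \mid \bSigma_0 = \bdelta] = \big(T^t \cR^{}(\mu_0)\big)_{\bdelta}$. By Theorem~\ref{linearisation}, $T^t \cR^{}(\mu_0) = \cR^{}(\mu_t)$, whose $\bdelta$-component is $\cR_\bdelta^{}(\mu_t)$, and this is exactly the asserted duality. The ``in particular'' statement then follows by specialising $\bdelta = \bpmax^\alpha$ and recalling, as noted just before Theorem~\ref{linearisation}, that $\cR_{\bpmax^\alpha}^{}(\mu_t) = \mu_t^{[n]}(\alpha) = \mu_t(\alpha)$ (the recombinator for the one-block partition with label $\alpha$ is just evaluation of the full population at location $\alpha$).

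There is essentially no hard analytic obstacle here; the only point requiring a word of care is that $\cR^{}(\mu_0)$ is vector-valued (its components are probability measures on $\cA$), so ``$\EE[\cR_{\bSigma_t}^{}(\mu_0)\mid\cdot]$'' should be read as a measure-valued conditional expectation, i.e. the finite convex combination $\sum_{\bepsilon}\PP(\bSigma_t=\bepsilon\mid\bSigma_0=\bdelta)\,\cR_\bepsilon^{}(\mu_0)$ of measures, which is well defined since the state space is finite. With that convention fixed, the argument is purely formal: it is the standard observation that a deterministic linear forward dynamics with matrix $T$ and a Markov chain with transition matrix $T$ are dual with respect to the bilinear pairing built from $T$'s action — here packaged through the duality function $(\bdelta,\mu)\mapsto\cR_\bdelta^{}(\mu)$. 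I would therefore present the proof in three short lines: (i) Chapman–Kolmogorov gives $\EE[\cR_{\bSigma_t}^{}(\mu_0)\mid\bSigma_0=\bdelta]=(T^t\cR^{}(\mu_0))_\bdelta$; (ii) Theorem~\ref{linearisation} gives $T^t\cR^{}(\mu_0)=\cR^{}(\mu_t)$; (iii) read off the $\bpmax^\alpha$-component for the stochastic representation.
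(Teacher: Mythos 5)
Your proposal is correct and takes essentially the same route as the paper: both read Theorem~\ref{duality} off Theorem~\ref{linearisation}, the paper by induction on $t$ via the one-step recursion $\cR^{}(\mu_{t+1}) = T\cR^{}(\mu_t)$ combined with the Markov property and time-homogeneity, you by invoking the $t$-step consequence $\cR^{}(\mu_t) = T^t \cR^{}(\mu_0)$ together with $\PP(\bSigma_t = \bepsilon \mid \bSigma_0 = \bdelta) = (T^t)_{\bdelta\bepsilon}$ in a single step, which is the same computation packaged through Chapman--Kolmogorov. Your handling of the measure-valued expectation as a finite convex combination and of the stochastic representation via the $\bpmax^\alpha$-component matches the paper's conventions.
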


\begin{proof}
We prove the theorem by induction over $t$. For $t = 0$, there is nothing to show. Assuming now that
\begin{equation*}
\EE [ \cR_{\bSigma_t}^{}( \mu_0) \mid \bSigma_0 = \bdelta ] = \cR^{}_\bdelta (\mu_t)
\end{equation*}
for any $t>0$,
we compute, using Theorem~\ref{linearisation} in the first step, the induction hypothesis in the second, time-homogeneity in the third, and the Markov property in the last:
\begin{equation*}
\begin{split}
\cR^{}_\bdelta(\mu_{t+1}) &= \sum_{\udo{\bepsilon} \preccurlyeq \bdelta} T_{\bdelta \bepsilon} \cR_\bepsilon(\mu_t) = \sum_{\udo{\bepsilon} \preccurlyeq \bdelta}  \PP[\bSigma_1 = \bepsilon \mid \bSigma_0 = \bdelta] \, \EE [ \cR_{\bSigma_t}^{}( \mu_0) \mid \bSigma_0 = \bepsilon]\\
&= \sum_{\udo{\bepsilon} \preccurlyeq \bdelta} \PP[\bSigma_1 = \bepsilon \mid \bSigma_0 = \bdelta] \, \EE [ \cR_{\bSigma_{t+1}}^{}( \mu_0) \mid \bSigma_1 = \bepsilon]  = \sum_{\udo{\bepsilon} \preccurlyeq \bdelta}\EE [ \cR_{\bSigma_{t+1}}^{}( \mu_0) \mid \bSigma_0 = \bdelta].
\end{split}
\end{equation*}
This proves the statement for $t + 1$. 
\end{proof}

Note that the duality function used here is vector valued. This is a slight extension of the standard notion, since the duality function is usually assumed to take values in $\RR$; see the references above for a thorough exposition. 

To get a better feel for this probabilistic way of thinking, we take advantage of the stochastic representation from Theorem~\ref{duality} to construct an explicit solution formula in the case of two sites. When evaluating the expectation, we distinguish two cases. Either, the two sites have not been separated until generation $t$, which happens with probability $r_{\pmax}^{t}$. In this case, both sites have the same ancestor who comes, with probability 
$(M^t)(\alpha ,\beta)$, from location $\beta$. Hence, in this case, $\mu_t^{}(\alpha) = (M^t \mu_0^{})(\alpha)$. If, on the other hand, the sites \emph{have} been separated, we denote by $\sigma$ the smallest $t$ such that $|\bSigma_t| = 2$. In this case, the letters come from two different parents. Their origins are determined by performing independent random walks on $L$ for the remaining time $t - \sigma + 1$. Summing over all possible values for $\sigma$ and the label of the block at the time of splitting (which is $\gamma$ with probability $(M^{\sigma -1})(\alpha, \gamma)$), we see that
\begin{equation} \label{discreteexample}
\mu_t^{}(\alpha) = r_{\pmax}^t (M^t \mu_0^{})(\alpha) + \sum_{\gamma \in L}  \sum_{\sigma = 1}^{t} r_{\pmax}^{\sigma-1} r_{\pmin}^{} (M^{\sigma - 1})(\alpha, \gamma) (M^{t - \sigma + 1} \mu^{}_0)^{\{1\}}(\gamma) \otimes (M^{t - \sigma + 1} \mu^{}_0)^{\{2\}}(\gamma).
\end{equation}

\begin{figure}[t]
\psfrag{1}{$1$}
\psfrag{2}{$2$}
\psfrag{3}{$3$}
\psfrag{4}{$4$}

\psfrag{t0}{$t = 0$}
\psfrag{t1}{$t = 1$}
\psfrag{t2}{$t = 2$}

\psfrag{3 to 2}{$3 \to 2$}
\psfrag{1 to 3}{$1 \to 3$}
\psfrag{1 to 2}{$1 \to 2$}
\psfrag{2 to 4}{$2 \to 4$}
\psfrag{2 to 1}{$2 \to 1$}

\includegraphics[width = 0.7\textwidth]{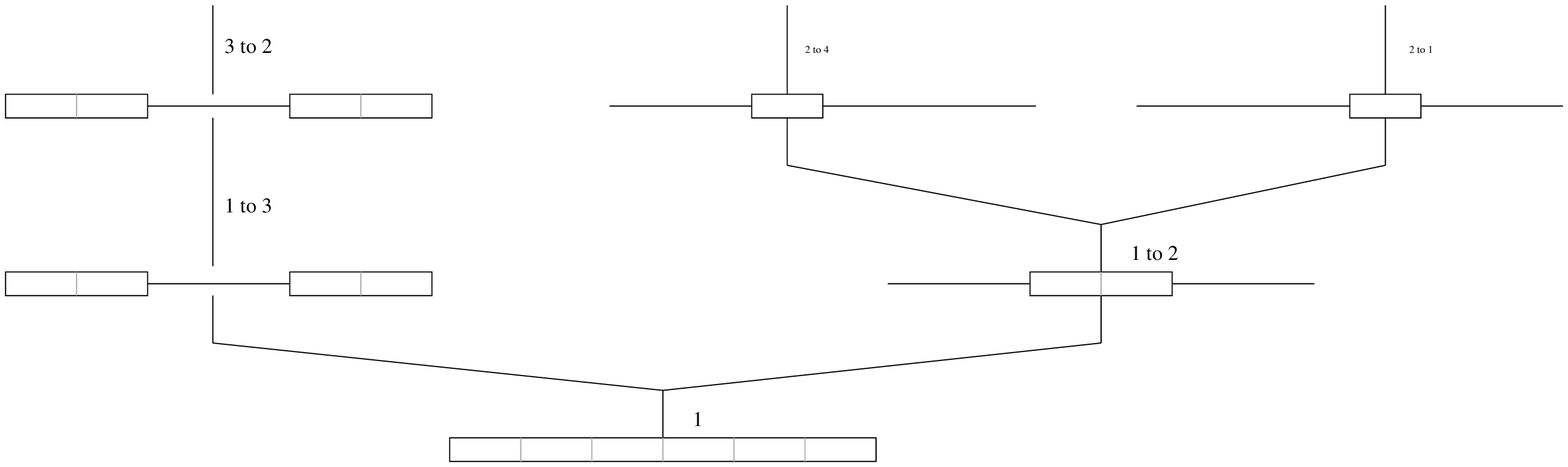}
\caption{\label{LPPillustrated}
An illustration of the LPP starting from $\bs{\pmax}^1$, the trivial partition consisting of a single block with label $1$; the set of locations is $L = \{1,2,3,4\}$. Backward time runs from bottom to top. In each generation, the blocks of the partition are first subject to individual splitting and we trace back the ancestral lines that belong to each fragment; compare Remarks~\ref{onestepancestrylabelled} and \ref{babyduality}. The fragments provided by each ancestor are labelled with their locations; we write $\alpha \to \beta$ to indicate migration from $\alpha$ to $\beta$. Recall that in the forward-time model, recombination occurs after migration. Thus, when looking backward in time, splitting due to recombination occurs before the reassignment of the label due to migration. In particular, the first event in this example is a splitting of our sequence located in deme 1.
}
\end{figure}

In the case without migration (i.e, when ignoring the labels), this genealogical process is a variant of the \emph{ancestral recombination graph} \citep{hudson,griffithsmarjoram96,griffithsmarjoram97,BhaskarSong}, which was used by \citet{haldane} to solve the recombination equation; see also \citet{recoreview}. More precisely, the \emph{unlabelled} partitioning process $\varSigma$ is simply the base of $\bSigma$. Likewise, the
transition matrix $T^{\! \text{ul}}$ of $\varSigma$ is obtained from $T$  by marginalising over the labels. Thus, $T^{\! \text{ul}}$ has the entries
\begin{equation}\label{Tulentries}
T^{\text{ul}}_{\delta \varepsilon} = \begin{cases} 
0, & \text{if } \varepsilon \not \preccurlyeq \delta, \\
\prod_{d \in \delta} r_{\varepsilon|_d}^d
, & \text{if } 
\varepsilon  \preccurlyeq \delta,
\end{cases}
\end{equation}
and the transition rates for the LPP factorise as
\begin{equation*}
T_{\bdelta \bepsilon} = T^{\text{ul}}_{\delta\varepsilon} \prod_{(d,\lambda) \in \bdelta} \prod_{(e,\gamma) \in \bepsilon|_d} M (\lambda, \gamma),
\end{equation*}
compare Lemma~\ref{factorisation_marginal}.
Note that $\varSigma$ is a process of progressive refinement, which never returns to a state $\not \preccurlyeq $ the current state. This is due to the absence of coalescence events in the law of large numbers regime, which means that the ancestral recombination \emph{graph} is actually a \emph{tree}.

\begin{remark} \label{branchingRW}
The LPP can be interpreted as a multitype branching random walk (BRW) on $L$, with the types given by the subsets of $[n]$. The particles move according to the transition kernel $M$, and, as evident from the product structure of the transitions in Eq.~\eqref{Tulentries},  undergo independent branching that is the same at every location; each individual of type $d$ branches with probability $r_\varepsilon^d$ into $|\varepsilon|$ individuals of types $e_1^{},\ldots,e^{}_{|\varepsilon|}$, where $\varepsilon = \{e_1^{},\ldots,e^{}_{|\varepsilon|}\}$.
\end{remark}

\section{Limiting and quasi-limiting behaviour of the LPP}
\label{sec:asymptotics}
We assume now that $M$ is primitive (that is, irreducible and aperiodic), which guarantees the existence of and convergence to a unique stable stationary distribution \mbox{$q = \big (q(\alpha) \big )_{\alpha \in L} \in \RR^L$} such that
\begin{equation}\label{qqM}
q^{\trans} = q^{\trans} M,
\end{equation}
where $\trans$ denotes transpose.

We also assume that 
\begin{equation} \label{generalassumption}
\bigwedge \{ \delta \in \AS({[n]}) : r_\delta^{} > 0 \} = \pmin.
\end{equation}
That is, the coarsest common refinement of all partitions with positive recombination probability is the trivial partition $\pmin$ of ${[n]}$ into singletons. This is only a matter of technical convenience; otherwise, we could simply consider as  a single site any set of sites that are not separated by any partition $\delta$ with $r_\delta^{} > 0$. Note that Eq.~\eqref{generalassumption} implies that $\pmin$ is the unique absorbing state of the unlabelled partitioning process. We can now explicitly state the asymptotic behaviour of the MRE~\eqref{recomig_ode}.

\begin{theorem}\label{MREasymptotics}
Under the above assumptions, one has
\[
 \lim_{t \to \infty} \mu_t = \mu_\infty = \big ( \mu_\infty(\alpha) \big )_{\alpha \in L},
 \]
 where
 \begin{equation}\label{muinftyalpha_prod}
   \mu_\infty(\alpha) =  \bigotimes_{i=1}^n  \mu_\infty^{\{i\}}(\alpha)
\end{equation}
and
\begin{equation}\label{muinftyalpha_expl}
 \mu_\infty^{\{i\}}(\alpha) \defeq \sum_{\beta \in L} q(\beta) \mu_0^{\{i\}}(\beta)
\end{equation}
for all $\alpha \in L$. The convergence is geometric, i.e. there is a $\gamma \in (0,1)$ such that 
\begin{equation*}
\mu_t = \mu_\infty + \bigo{\gamma^t}
\end{equation*}
as $t \to \infty$, uniformly in $\mu_0$.
\end{theorem}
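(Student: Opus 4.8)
The plan is to exploit the stochastic representation $\mu_t(\alpha) = \EE[\cR_{\bSigma_t}(\mu_0) \mid \bSigma_0 = \bpmax^\alpha]$ from Theorem~\ref{duality} together with the limiting behaviour of the LPP. First I would analyse the unlabelled partitioning process $\varSigma$. By the general assumption~\eqref{generalassumption}, $\pmin$ is the unique absorbing state of $\varSigma$; since $\varSigma$ is a finite Markov chain of progressive refinement, every trajectory reaches $\pmin$ eventually, and in fact $\PP(\varSigma_t \neq \pmin \mid \varSigma_0 = \pmax) = \bigo{\gamma_1^t}$ for some $\gamma_1 \in (0,1)$ (this is the standard geometric absorption estimate for finite absorbing chains; one can take $\gamma_1$ to be related to $\max_{\delta \neq \pmin}(1 - T^{\text{ul}}_{\delta\pmin\text{-reachability}})$, or more simply bound the probability of not having fully fragmented after $k$ steps and iterate). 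On the event $\{\varSigma_t = \pmin\}$, the labelled state $\bSigma_t$ consists of $n$ singleton blocks $\{i\}$ carrying labels $\gamma_i \in L$, and conditionally on this event the labels evolve, backward from generation $\sigma$ (the fragmentation time) onward, as \emph{independent} random walks on $L$ with kernel $M$ (this is the product structure of $T$ over blocks, Lemma~\ref{factorisation_marginal}, combined with Remark~\ref{branchingRW}).

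Next I would put these together. Write $\cR_{\bSigma_t}(\mu_0) = \bigotimes_{(d,\lambda)\in\bSigma_t}\mu_0^d(\lambda)$. Split the expectation according to $\{\varSigma_t = \pmin\}$ and its complement. The complement contributes at most $\bigo{\gamma_1^t}$ in total variation, uniformly in $\mu_0$, since $\cR_{\bSigma_t}(\mu_0)$ is a probability measure and hence bounded. On $\{\varSigma_t = \pmin\}$ we get $\bigotimes_{i=1}^n \mu_0^{\{i\}}(\gamma_i^{(t)})$ where $\gamma_i^{(t)}$ is the location of the $i$-th lineage at time $t$. Here the lineages are only conditionally independent given the fragmentation time and the common ancestral locations up to that time, but since $M$ is primitive, the law of $\gamma_i^{(t)}$ converges to $q$ geometrically fast \emph{regardless of the starting location} — the Perron–Frobenius estimate $\lvert (M^s)(\beta,\cdot) - q\rvert = \bigo{\gamma_2^s}$ uniformly in $\beta$, for some $\gamma_2\in(0,1)$. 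Because the fragmentation time $\sigma$ is itself $\bigo{\gamma_1^t}$-unlikely to be large (it is dominated by the absorption time of $\varSigma$), the remaining walk length $t-\sigma$ is $\geqslant t/2$, say, with probability $1 - \bigo{\gamma_1^{t/2}}$, so each marginal distribution of $\gamma_i^{(t)}$ is within $\bigo{\gamma_2^{t/2}}$ of $q$; moreover, asymptotic independence of the $\gamma_i^{(t)}$ follows because conditionally on $\sigma$ and the shared pre-$\sigma$ locations the walks are genuinely independent, and the shared prefix has vanishing influence. Hence $\EE[\bigotimes_i \mu_0^{\{i\}}(\gamma_i^{(t)}) \mid \varSigma_t = \pmin] \to \bigotimes_{i=1}^n \sum_{\beta} q(\beta)\mu_0^{\{i\}}(\beta)$ at rate $\bigo{\gamma_3^t}$ for a suitable $\gamma_3 \in (0,1)$, uniformly in $\mu_0$ (the map $\mu_0 \mapsto \mu_0^{\{i\}}$ is linear and norm-contracting, so all estimates are uniform). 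Setting $\gamma \defeq \max\{\gamma_1^{1/2},\gamma_2^{1/2},\gamma_3\}$ gives $\mu_t(\alpha) = \mu_\infty(\alpha) + \bigo{\gamma^t}$ with $\mu_\infty(\alpha)$ as in~\eqref{muinftyalpha_prod}–\eqref{muinftyalpha_expl}; note the limit does not depend on $\alpha$, consistent with full mixing.

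Alternatively — and this is probably cleaner to write — I would argue directly at the level of the linear system $\cR(\mu_t) = T^t\cR(\mu_0)$ from Theorem~\ref{linearisation}. The matrix $T$ is stochastic, and its restriction to the labelled partitions with base $\pmin$ is the transition matrix of $n$ independent $M$-walks, hence primitive with stationary distribution the $n$-fold product of $q$; combined with geometric absorption of $\varSigma$ into $\pmin$, $T$ has a unique closed communicating class on which it is primitive, so $T^t$ converges geometrically to the projection onto the corresponding stationary left-eigenvector. Reading off the $\bpmax^\alpha$-component of $\lim_t T^t\cR(\mu_0)$ yields the formula. This route reduces everything to Perron–Frobenius for $T$ plus the absorption estimate for $\varSigma$.

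The main obstacle I anticipate is making the asymptotic independence of the $n$ lineage-locations $\gamma_i^{(t)}$ fully rigorous while keeping the error bound uniform in $\mu_0$: one must control the residual correlation induced by the shared ancestral path before the fragmentation time $\sigma$, and argue that this correlation decays geometrically. The cleanest way around this is the spectral argument of the last paragraph, where one never deals with the lineages individually but instead invokes primitivity of $T$ restricted to its unique recurrent class directly; the only real work is then identifying that recurrent class (the labelled partitions with base $\pmin$) and its unique stationary distribution $(q^{\otimes n})$, and checking aperiodicity there, both of which follow from primitivity of $M$ and assumption~\eqref{generalassumption}. The rate $\gamma$ can then be taken to be the second-largest modulus among the relevant eigenvalues of $T$ (equivalently $\max$ of the subdominant eigenvalue of $M$ and the absorption rate of $\varSigma$), and uniformity in $\mu_0$ is automatic since $\cR(\mu_0)$ ranges over a compact set and the convergence $T^t \to \Pi$ is in operator norm.
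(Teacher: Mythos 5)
Your first argument is essentially the paper's own proof: it combines the stochastic representation of Theorem~\ref{duality} with geometric absorption of the base (Lemma~\ref{geometricabsorption}) and the observation that, after fragmentation, the singleton labels perform independent $M$-walks whose laws converge geometrically to $q$ (Proposition~\ref{LPPstationary}); your way of controlling the residual dependence through the shared pre-fragmentation history is the same device as the paper's conditioning on $\tau \leqslant \lfloor t/2 \rfloor$. Your alternative spectral route is a genuinely different packaging of the same facts: instead of estimating $\PP(\bSigma_t = \bdelta)$ probabilistically, you apply the standard convergence theorem for a finite stochastic matrix with a unique aperiodic recurrent class directly to $T$, identifying that class as the labelled partitions with base $\pmin$ (closed because the base only refines, reached almost surely by absorption under assumption~\eqref{generalassumption}, and primitive because primitivity of $M$ makes the $n$-fold product chain primitive with stationary law $\prod_{i=1}^n q(\alpha_i)$), and then read off the $\bpmax^\alpha$-component of $T^t \cR(\mu_0)$. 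This buys uniformity in $\mu_0$ for free (operator-norm convergence of $T^t$ tested against the bounded vector $\cR(\mu_0)$) and exhibits the rate $\gamma$ as governed by the absorption rate $\eta$ and the subdominant eigenvalue modulus of $M$; the paper's probabilistic route, on the other hand, is the one that extends naturally to the quasi-limiting analysis of Theorem~\ref{QLD_migreco}, where one conditions on non-absorption and a purely spectral projection argument would be less transparent. Both routes are correct.
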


This is in line with \citet[Theorem 3.1]{Buerger09}, which states that the solution of the MRE~\eqref{recomig_ode} approaches (at a uniform geometric rate) the submanifold defined by spatial stationarity and linkage equilibrium. Spatial stationarity means that
\begin{equation*}
\mu(\alpha) = \sum_{\beta \in L} q(\beta)\mu(\beta)
\end{equation*}
with $q$ of \eqref{qqM}; and, under the assumption~\eqref{generalassumption}, linkage equilibrium  means that 
$\mu(\alpha)$ is the product of its one-dimensional marginals, as in Eq.~\eqref{muinftyalpha_prod}. However, like the explicit time evolution in Theorem~\ref{linearisation}, the explicit expression in Eq.~\eqref{muinftyalpha_expl} seems to be new.

In view of Theorem~\ref{duality}, this result is highly plausible: almost surely (at a uniform geometric rate), the partitioning process will enter its unique absorbing state where all blocks are singletons. In the sequel, independent migration processes will, for each block, converge to the unique stationary distribution $q$, again at a geometric rate and uniformly in the initial distribution. This behaviour is also clear in terms of the BRW picture. At some point, the type of each particle is a singleton, whence the particles stop branching and just keep performing independent random walks; see Remark~\ref{branchingRW}.

For the formal proof, note that the uniform convergence of the migration processes follows directly from the primitivity of $M$ via standard theory \citep[Appendix, Thm.~2.3]{Karlin_Taylor}. That the partitioning process enters its absorbing state at a uniform geometric rate is the content of the following lemma. \newpage
\begin{lemma} \label{geometricabsorption}
Let 
\begin{equation*}
\eta \defeq \max_{\delta \in \AS({[n]}) \setminus \{ \pmin \}} T^{\rm{ul}}_{\delta\delta} < 1
\end{equation*}
be the maximal sojourn probability of the unlabelled partitioning process and let 
\begin{equation*}
\tau \defeq \min \{ t \in \NN_0 : \varSigma_t = \pmin \}
\end{equation*}
be its time to absorption.
Then, uniformly in the initial distribution,
\begin{equation*}
\PP (\tau > t) = \cO\big ((\eta + \varepsilon)^t \big )
\end{equation*}
for any $\varepsilon > 0$ as $t \to \infty$.
\end{lemma}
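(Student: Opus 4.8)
The plan is to exploit the fact that the unlabelled partitioning process $\varSigma$ is a \emph{monotone refinement} chain, so that its time to absorption is dominated by a sum of at most $n-1$ geometrically distributed holding times, each with sojourn probability bounded by $\eta$; the rate $\eta+\varepsilon$ then drops out of a Chernoff-type estimate. First I would record the structural facts. Since $T^{\mathrm{ul}}_{\delta\varepsilon}=0$ whenever $\varepsilon\not\preccurlyeq\delta$, every trajectory of $\varSigma$ is non-increasing for $\preccurlyeq$, and each move to a strictly finer partition raises the block count $\lvert\varSigma_t\rvert$ by at least one. As $1\leqslant\lvert\delta\rvert\leqslant n$ with $\lvert\pmin\rvert=n$, and as $\pmin$ is the unique absorbing state under~\eqref{generalassumption} (which is also what makes $\eta<1$), the chain performs at most $n-1$ such moves before it is absorbed in $\pmin$, irrespective of its starting state.

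Next I would set up the holding times. Let $\theta_0\defeq 0$, let $\theta_1<\theta_2<\cdots$ be the successive times at which $\varSigma$ jumps to a strictly finer state, with the convention $\theta_k\defeq\theta_{k-1}$ once $\pmin$ has been reached, and put $H_k\defeq\theta_k-\theta_{k-1}$. By the previous paragraph, $\tau\leqslant\theta_{n-1}=\sum_{k=1}^{n-1}H_k$. By the (elementary, discrete-time) strong Markov property, conditionally on $\cF_{\theta_{k-1}}$ and on $\varSigma_{\theta_{k-1}}=\delta\neq\pmin$, the holding time $H_k$ is geometric with $\PP(H_k>m\mid\cF_{\theta_{k-1}})=(T^{\mathrm{ul}}_{\delta\delta})^m\leqslant\eta^{m}$, while $H_k=0$ on $\{\varSigma_{\theta_{k-1}}=\pmin\}$. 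Hence, for any real $z$ with $1\leqslant z<1/\eta$, one gets $\EE[z^{H_k}\mid\cF_{\theta_{k-1}}]\leqslant (1-\eta)z/(1-\eta z)$ in both cases (the right-hand side is $\geqslant 1$ for $z\geqslant 1$ and is monotone in the sojourn probability), and iterating this bound via the tower property yields, uniformly in the initial state and hence in the initial distribution,
\begin{equation*}
\EE\big[z^{\tau}\big]\;\leqslant\;\EE\big[z^{H_1+\cdots+H_{n-1}}\big]\;\leqslant\;\Big(\frac{(1-\eta)z}{1-\eta z}\Big)^{n-1}\;\eqdef\;C_z\;<\;\infty .
\end{equation*}

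To conclude, fix $\varepsilon>0$; I may assume $\varepsilon<1-\eta$, since otherwise $(\eta+\varepsilon)^t\geqslant1$ and there is nothing to prove. Taking $z\defeq(\eta+\varepsilon)^{-1}\in(1,1/\eta)$ and applying Markov's inequality to the nonnegative random variable $z^{\tau}$ gives
\begin{equation*}
\PP(\tau>t)\;=\;\PP\big(z^{\tau}>z^{t}\big)\;\leqslant\;z^{-t}\,\EE[z^{\tau}]\;\leqslant\;C_z\,(\eta+\varepsilon)^t ,
\end{equation*}
uniformly in the initial distribution, which is exactly the assertion $\PP(\tau>t)=\cO\big((\eta+\varepsilon)^t\big)$.

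The routine-but-delicate point is the second paragraph: one must check that $\tau\leqslant\sum_{k=1}^{n-1}H_k$ continues to hold when absorption occurs after fewer than $n-1$ jumps (this is what the convention $\theta_k\defeq\theta_{k-1}$, together with the fact that the displayed conditional bound on $\EE[z^{H_k}\mid\cF_{\theta_{k-1}}]$ also covers the degenerate case $H_k=0$, takes care of), and that the conditional geometric law of each $H_k$ — and thus the product bound on the moment generating function — is genuinely uniform in the current state. Both are standard consequences of the strong Markov property, but that is the place where an argument could slip. Alternatively, one could phrase this step as a stochastic domination of $\tau$ by a negative binomial variable and quote the known polynomial-times-geometric tail; the Chernoff route above is used because it produces the stated rate $\eta+\varepsilon$ directly.
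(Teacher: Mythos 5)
Your proof is correct, and it reaches the stated bound by a technically different route from the paper, although both rest on the same two structural facts: the chain is a pure refinement process, so it can jump only finitely often before absorption (you make the bound $n-1$ explicit, the paper just takes some finite $m$), and every sojourn at a non-absorbing state is geometrically dominated with parameter $\eta$. The paper works on the event $\{\tau > t\}$ directly: there all $t$ steps occur at non-absorbing states, the number of jumps among them stochastically dominates a binomial with success probability $1-\eta$, and the resulting bound $\PP(\tau>t)\leqslant\sum_{j=0}^{m}\binom{t}{j}(1-\eta)^j\eta^{t-j}\leqslant C' t^m\eta^t$ is then weakened to $C(\eta+\varepsilon)^t$. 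You instead dominate $\tau$ by the sum of the $n-1$ holding times, bound the conditional generating function of each holding time uniformly in the current state, and finish with a Chernoff/Markov inequality at $z=(\eta+\varepsilon)^{-1}$; this is the same negative-binomial domination seen from the moment-generating-function side rather than the tail-counting side, and it buys you the rate $(\eta+\varepsilon)^t$ directly with an explicit constant $C_z$, whereas the paper's route passes through the slightly sharper polynomial-times-geometric intermediate bound. Your flagged delicate points all check out: the freezing convention $\theta_k=\theta_{k-1}$ after absorption together with $\EE[z^{H_k}\mid\cF_{\theta_{k-1}}]\geqslant 1$ covers degenerate holding times, the monotonicity of $(1-p)z/(1-pz)$ in $p$ for $z\geqslant 1$ justifies replacing $T^{\mathrm{ul}}_{\delta\delta}$ by $\eta$, and the strong Markov property at the (a.s.\ finite, since every non-absorbing sojourn probability is at most $\eta<1$) stopping times $\theta_{k-1}$ gives the uniformity in the current state and hence in the initial distribution.
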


\begin{proof}
Since the state space is finite and the partitioning process never returns to a state $\not \preccurlyeq$ the current state, this Markov chain may jump at most a finite number of times, say $m$ times, before it is absorbed in $\pmin$. Thus, for any fixed $\varepsilon > 0$,
\begin{equation*}
\begin{split}
\PP(\tau > t) &\leqslant \PP(\text{the chain has performed at most $m$ jumps up to time $t$}) \\
&\leqslant \sum_{j = 0}^m \binom{t}{j} (1 - \eta)^j \eta^{t - j} \\
&\leqslant \sum_{j = 0}^m \Big ( \frac{1 - \eta}{\eta}   \Big )^j t^m \eta^t = C' t^m \eta^t \leqslant C \eta^t \Big (\frac{\eta + \varepsilon}{\eta} \Big)^t = C(\eta + \varepsilon)^t,
\end{split}
\end{equation*}
where $C' = \sum_{j = 0}^m \Big ( \frac{1 - \eta}{\eta}   \Big )^j$ and  $C$ is sufficiently large. 
\end{proof}
  
Next, we investigate the asymptotic behaviour of the LPP.
\begin{prop}\label{LPPstationary}
There exists a $\gamma \in (0,1)$ such that 
\begin{equation*}
\PP \big ( \bSigma_t = \big \{(\{1\},\alpha_1),\ldots,(\{n\},\alpha_n)\big \} \big ) = \prod_{i  = 1}^n q(\alpha_i) + \bigo{\gamma^t}
\end{equation*}
as $t \to \infty$, uniformly in $\alpha^{}_1,\ldots,\alpha^{}_n \in L$ and the initial distribution of the \textnormal{LPP}.
For $\bdelta \in \LL \AS ({[n]})$ with $\delta \neq \pmin$, 
\begin{equation*}
\PP( \bSigma_t = \bdelta) = \cO \big ((\eta + \varepsilon)^t \big ),
\end{equation*}
for all $\varepsilon > 0$, again uniformly in the initial distribution.
\end{prop}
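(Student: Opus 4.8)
The plan is to reduce the statement to two ingredients that are already essentially available: the geometric absorption of the unlabelled process $\varSigma$ (Lemma~\ref{geometricabsorption}) and the geometric ergodicity of the migration chain with kernel $M$. For the second assertion, observe that $\{\bSigma_t = \bdelta\}$ with $\delta \neq \pmin$ is contained in the event $\{\varSigma_t = \delta\} \subseteq \{\tau > t\}$, since $\varSigma$ is the base of $\bSigma$ and $\delta \neq \pmin$ forces the unlabelled process not yet to have reached absorption. Hence $\PP(\bSigma_t = \bdelta) \leqslant \PP(\tau > t) = \cO((\eta + \varepsilon)^t)$ by Lemma~\ref{geometricabsorption}, uniformly in the initial distribution; this disposes of the second claim with no further work.

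For the first assertion, I would use the branching-random-walk picture of Remark~\ref{branchingRW} together with the factorisation $T_{\bdelta\bepsilon} = T^{\rm ul}_{\delta\varepsilon} \prod_{(d,\lambda)\in\bdelta}\prod_{(e,\gamma)\in\bepsilon|_d} M(\lambda,\gamma)$. Conditionally on the trajectory of $\varSigma$ (in particular on the absorption time $\tau$ and on the partition history), the labels of the $n$ singleton blocks at time $t$ are obtained by running $n$ conditionally independent $M$-random walks; each singleton $\{i\}$ has been a block since some time $\tau_i \leqslant \tau$, from which point its label performs an independent walk for $t - \tau_i$ steps. Writing $\PP_{\rm init}$ for the (arbitrary) initial distribution of $\bSigma_0$ and decomposing over $\{\tau \leqslant t^{1/2}\}$ and its complement: on $\{\tau > t^{1/2}\}$ we use the crude bound from Lemma~\ref{geometricabsorption}, contributing $\cO((\eta+\varepsilon)^{\sqrt t})$, which is $\cO(\gamma^t)$ for suitable $\gamma$; on $\{\tau \leqslant t^{1/2}\}$, each label walk has run for at least $t - t^{1/2} \to \infty$ steps, so by the standard geometric convergence to $q$ for primitive $M$ \citep[Appendix, Thm.~2.3]{Karlin_Taylor}, and the conditional independence of the $n$ walks, the conditional probability of the configuration $(\alpha_1,\ldots,\alpha_n)$ equals $\prod_{i=1}^n q(\alpha_i) + \cO(\rho^{\,t - \sqrt t})$ for some $\rho \in (0,1)$, uniformly in the $\alpha_i$ and in the conditioning. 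Taking expectations over the $\varSigma$-trajectory and over the initial distribution, and absorbing the error $\cO((\eta+\varepsilon)^{\sqrt t}) + \cO(\rho^{\,t-\sqrt t})$ into a single $\cO(\gamma^t)$, yields the claim.

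The main obstacle is bookkeeping the conditional independence cleanly: one must argue that, given the entire unlabelled partition history, the label processes attached to the eventual singletons are genuinely independent $M$-chains started (at their respective birth times) from the appropriate labels, and that the convergence $\cO(\rho^{\,t-\tau_i})$ is uniform over all admissible conditionings and over all starting locations. Both are consequences of the product form of $T$ over blocks and of the uniformity built into the Perron–Frobenius convergence for primitive stochastic matrices, but they need to be stated with a little care; the split at $t^{1/2}$ is just a convenient device to make the two error terms visibly comparable. Everything else — summing the geometric errors, invoking Lemma~\ref{geometricabsorption} — is routine.
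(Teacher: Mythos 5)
Your treatment of the second claim is exactly the paper's: $\{\bSigma_t = \bdelta\} \subseteq \{\tau > t\}$ for $\delta \neq \pmin$, then Lemma~\ref{geometricabsorption}. For the first claim your overall strategy also coincides with the paper's proof: condition on absorption having occurred by some intermediate time, use the product structure of $T$ to argue that the labels of the singleton blocks are conditionally independent $M$-chains, and invoke the uniform geometric convergence of a primitive chain to $q$.

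However, there is a genuine flaw in your choice of cutoff. You split at $t^{1/2}$ and assert that the contribution $\cO\big((\eta+\varepsilon)^{\sqrt{t}}\big)$ from $\{\tau > t^{1/2}\}$ ``is $\cO(\gamma^t)$ for suitable $\gamma$''. This is false: $(\eta+\varepsilon)^{\sqrt{t}} = \ee^{-c\sqrt{t}}$ decays only stretched-exponentially and is \emph{not} $\cO(\gamma^t)$ for any $\gamma \in (0,1)$. As written, your argument therefore only yields an error term of order $\ee^{-c\sqrt{t}}$, which does not establish the geometric rate asserted in the proposition (and needed for the geometric convergence in Theorem~\ref{MREasymptotics}). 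The repair is immediate and is precisely what the paper does: split at a time proportional to $t$, e.g.\ $\lfloor t/2 \rfloor$. Then Lemma~\ref{geometricabsorption} gives $\PP(\tau > \lfloor t/2\rfloor) = \cO\big(((\eta+\varepsilon)^{1/2})^t\big)$, while on $\{\tau \leqslant \lfloor t/2\rfloor\}$ each label walk has run for at least $t/2$ steps, so the Perron--Frobenius bound gives $q(\alpha_i) + \cO\big((\rho^{1/2})^t\big)$ uniformly; both error terms are then genuinely geometric in $t$ and can be absorbed into a single $\cO(\gamma^t)$. With that change (and the bookkeeping of conditional independence you already indicate, which matches the paper's use of the independent chains $\Lambda^{(i)}$ from time $\tau$ onwards), your proof is correct and essentially identical to the paper's.
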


\begin{proof}
Let $\tau$ be as in Lemma~\ref{geometricabsorption}. The second statement follows immediately from Lemma~\ref{geometricabsorption} by noting that
\begin{equation*}
\PP(\bSigma_t = \bdelta) \leqslant\PP(\tau > t).
\end{equation*}
Now, assume that $\bdelta$ is of the form
\begin{equation*}
\bdelta = \big  \{ (\{1\},\alpha^{}_1),\ldots,(\{n\},\alpha^{}_n) \big \}.
\end{equation*}
Then, for all $\gamma_1 > \eta$,
\begin{equation}\label{splitsdfre}
\begin{split}
\PP( \bSigma_t = \bdelta)&\\ &= \PP \Big (\bSigma_t = \bdelta \mid \tau \leqslant \Big \lfloor \frac{t}{2} \Big \rfloor \Big )  \PP \Big (\tau \leqslant \Big \lfloor \frac{t}{2} \Big \rfloor \Big ) + 
 \PP \Big (\bSigma_t = \bdelta \mid \tau > \Big \lfloor \frac{t}{2} \Big \rfloor \Big ) \PP \Big (\tau > \Big \lfloor \frac{t}{2} \Big \rfloor \Big ) \\
&= \PP \Big (\bSigma_t = \bdelta \mid \tau \leqslant \Big \lfloor \frac{t}{2} \Big \rfloor \Big ) + \cO(\gamma_1^t)
\end{split}
\end{equation}
as $t \to \infty$, where the last step follows by an application of Lemma~\ref{geometricabsorption}. Furthermore,
\begin{equation}\label{migprocessesdqwefd}
\begin{split}
\PP \Big (\bSigma_t = \bdelta \mid \tau \leqslant \Big \lfloor \frac{t}{2} \Big \rfloor \Big ) &= \PP \Big (\Lambda_t^{(i)} = \alpha_i \text{ for all } 1 \leqslant i \leqslant n \mid \tau \leqslant \Big \lfloor \frac{t}{2} \Big \rfloor \Big ) \\ &= \prod_{i = 1}^n \PP \Big (\Lambda_t^{(i)} = \alpha_i \mid \tau  \leqslant \Big \lfloor \frac{t}{2} \Big \rfloor \Big ).
\end{split}
\end{equation}
Here, the $ \big ( \Lambda_t^{(i)} \big )_{t \in \NN_{\geqslant \tau}}$ for $i \in L$ are the labels of the (singleton) blocks from time $\tau$ onwards;
 they are   independent $L$-valued Markov chains with transition matrix $M$. By standard theory, we can be sure that, regardless of  the initial value, there is a $\gamma_2 \in (0,1)$ such that
\begin{equation*}
\PP \Big (\Lambda_t^{(i)} = \alpha_i \mid \tau  \leqslant \Big \lfloor \frac{t}{2} \Big \rfloor \Big ) = q(\alpha_i) + \bigo{\gamma_2^t},
\end{equation*}
uniformly in $\alpha_i$. 
Combining this with Eqs.~\eqref{splitsdfre} and \eqref{migprocessesdqwefd} proves the theorem. 
\end{proof}

\begin{proof}[Proof of Theorem~\textnormal{\ref{MREasymptotics}}]
By Theorem~\ref{duality}, Proposition~\ref{LPPstationary}, and Definition~\ref{labelledrecombinators}, we have for some $\gamma \in (0,1)$, independent of $\mu_0$, 
\begin{equation*}
\begin{split}
\mu_t(\alpha) &= \EE [\cR^{}_{\bSigma_t} (\mu_0^{}) \mid \bSigma_0 = \bpmax^\alpha ] \\
&= \sum_{\beta_1,\ldots,\beta_n \in L} \Big ( \prod_{i = 1}^n q(\beta_i) \Big )\EE [\cR^{}_{\bSigma_t} (\mu_0^{}) \mid \bSigma_0 = \bpmax^\alpha, \bSigma_t = \{ ( \{1\},\beta_1),\ldots,(\{n\},\beta_n) \} ] + \bigo{\gamma^t}\\
&= \sum_{\beta_1,\ldots,\beta_n \in L} \bigotimes_{i = 1}^n q (\beta_i)\mu_0^{\{i\}}(\beta_i) + \bigo{\gamma^t} \\
&= \bigotimes_{i = 1}^n \sum_{\beta \in L} q(\beta) \mu_0^{\{i\}}(\beta) + \bigo{\gamma^t} = \bigotimes_{i = 1}^n \mu_\infty^{\{i\}} (\alpha) + \bigo{\gamma^t} = \mu_\infty^{}(\alpha) + \bigo{\gamma^t}
\end{split}
\end{equation*}
\end{proof}

Since the asymptotic behaviour of the LPP is so simple, we now go one step further and inquire about its \emph{quasi}-limiting behaviour; that is, its asymptotic behaviour, conditioned on non-absorption of its base. Generally speaking, quasi-limiting distributions describe the first-order approximation of the deviation from the stationary behaviour. Recall that the partitioning process (labelled or unlabelled) is a process of progressive refinement, and never returns to a state coarser than the current state. This is very different from the situation considered by~\citet{cms}, where the focus is on irreducible chains.


Unlike the limiting distribution, the quasi-limiting distribution will generally depend on the initial distribution. 
For convenience of notation, we let the LPP start from a maximal labelled partition $\bpmax^\alpha$, consisting of a single block with label $\alpha$. However, the following discussion can easily be adapted to the more general setting.
In what follows, we will exclude the pathological case of $r_{\pmin}^{} = 1$, where the probability of non-absorption is zero, and the conditional distribution we are interested in is not  well defined. 

We start by recalling the quasi-limiting behaviour of  $\varSigma$, which was already investigated by~\cite{Martinez}. We posit throughout that $\varSigma_0=\pmax$. To state the result, we need some additional notation. 
First, we define the set of states
\begin{equation*}
\AS^\downarrow({[n]}) \defeq \{ \delta \in \AS({[n]}) : \exists \ell \in \NN \text{ s.t. }\big ( (T^{\text{ul}})^\ell \big)_{\pmax \delta} >0 \}
\end{equation*}
that are \emph{reachable} by $\varSigma$ when starting in $\pmax$. As before, $\eta$ denotes the maximal sojourn probability of $\varSigma$ (compare Lemma~\ref{geometricabsorption}).
We will also need the set
\begin{equation*}
\cF \defeq \{\delta \in \AS^\downarrow({[n]}) : T^{\text{ul}}_{\delta\delta} = \eta\}
\end{equation*}
of reachable states with maximal sojourn probability. Note that our assumption $r_{\pmin}^{} \neq 1$ guarantees that $\eta > 0$. 
Finally, we define the \emph{first hitting time} of any given $\delta \in \AS({[n]})$,
\begin{equation*}
\tau_\delta^{} \defeq \min \{ t \in \NN_0 : \varSigma_t = \delta \},
\end{equation*} 
we write
$
\tau_\cF^{} \defeq \min_{\delta \in \cF} \tau_\delta^{}
$
for the first hitting time of $\cF$, and, as before, 
$
\tau = \tau_{\pmin}
$
for the time to absorption. 
The following result is known; see \citet[Theorem~5.5]{Martinez}.

\begin{theorem}\label{QLD_partitions}
For all $\delta \in \cF$, one has
\begin{equation*}
0 < \EE [ \eta^{- \tau_\delta^{}}; \tau_\delta^{} < \infty] \leqslant\EE [ \eta^{-\tau_{\cF}^{}}; \tau^{}_{\cF} < \infty ] < \infty.
\end{equation*}
For all $\delta \in \AS({[n]})$, the limit 
\begin{equation*}
\PP_{\textnormal{qlim}}^\varSigma (\delta) \defeq \lim_{t \to \infty} \PP(\varSigma_t = \delta \mid \tau > t)
\end{equation*}
exists and is given by
\begin{equation*}
\PP_{\textnormal{qlim}}^\varSigma (\delta) = \frac{\EE [ \eta^{- \tau_\delta^{}}; \tau_\delta^{} < \infty]}{\EE [ \eta^{-\tau_{\cF}^{}}; \tau^{}_{\cF} < \infty ]} \one_{\delta \in \cF}.
\end{equation*}
Thus defined, $\PP_{\textnormal{qlim}}^\varSigma$ is a probability measure on $\AS({[n]})$, called the \emph{quasi-limiting distribution} of $\varSigma$ 
$($starting from $\pmax$$)$.
\end{theorem}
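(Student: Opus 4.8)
The plan is to analyse the quasi-limiting behaviour of $\varSigma$ via a careful decomposition of the trajectory according to when, and in which state of $\cF$, the chain spends most of its sojourn time. The starting point is the observation (already used in Lemma~\ref{geometricabsorption}) that $\varSigma$ is a progressive-refinement chain on a finite poset, so every trajectory visits a finite, totally ordered (under $\preccurlyeq$) sequence of states $\pmax = \delta^{(0)} \succcurlyeq \delta^{(1)} \succcurlyeq \cdots \succcurlyeq \delta^{(k)} = \pmin$, spending a geometrically distributed amount of time in each; the sojourn in $\delta^{(j)}$ is $\geometric$ with parameter $1 - T^{\textnormal{ul}}_{\delta^{(j)}\delta^{(j)}}$. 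The key point is that $\PP(\tau > t)$ is dominated, up to a polynomial factor, by $\eta^t$, and the trajectories that contribute to $\{\tau > t\}$ at leading order are exactly those whose time is overwhelmingly spent in states of $\cF$ (states realising the maximal sojourn parameter $\eta$).

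\medskip
\noindent\textbf{Step 1: reduction to a single visit to a state of maximal sojourn.} First I would show that, on $\{\tau > t\}$ and up to $\cO\big((\eta')^t\big)$ error with $\eta' < \eta$, the chain hits $\cF$, and in fact hits $\cF$ before time $\varepsilon t$; here one uses that the number of jumps is bounded by a constant $m$ (the longest chain in the poset) and that any state not in $\cF$ has sojourn parameter bounded away from $\eta$, so spending a positive fraction of time outside $\cF$ costs an exponential factor $(\eta')^{\Theta(t)}$. This already yields $\PP(\varSigma_t = \delta \mid \tau > t) \to 0$ for $\delta \notin \cF$ (second display in the theorem's conclusion on $\AS({[n]}) \setminus \cF$).

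\medskip
\noindent\textbf{Step 2: identify the leading constant via $\eta^{-\tau_\delta}$.} For $\delta \in \cF$, I would write
\begin{equation*}
\PP(\varSigma_t = \delta \mid \tau > t) = \frac{\PP(\varSigma_t = \delta,\ \tau > t)}{\PP(\tau > t)},
\end{equation*}
and estimate numerator and denominator separately. For the numerator, decompose over the hitting time $\tau_\delta = s$: by the Markov property, $\PP(\varSigma_t = \delta,\ \tau_\delta = s) = \PP(\tau_\delta = s)\,\eta^{t-s}$ for $s \leqslant t$ (once in $\delta$, the chain must stay there, and $T^{\textnormal{ul}}_{\delta\delta} = \eta$), so
\begin{equation*}
\PP(\varSigma_t = \delta,\ \tau > t) = \eta^t \sum_{s = 0}^{t} \PP(\tau_\delta = s)\,\eta^{-s},
\end{equation*}
which converges to $\eta^t\,\EE[\eta^{-\tau_\delta}; \tau_\delta < \infty]$ provided that expectation is finite. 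Finiteness is where the structure enters: since $\tau_\delta < \infty$ forces the chain to pass only through states with sojourn parameter $\leqslant \eta$ before reaching $\delta$, a crude bound gives $\PP(\tau_\delta = s) \leqslant \binom{s}{m}(\eta + o(1))^{s}$-type decay... more precisely, one argues $\PP(\tau_\delta > s) = \cO\big(s^m \eta^s\big)$ exactly as in Lemma~\ref{geometricabsorption}, but this only gives $\EE[\eta^{-\tau_\delta}] \leqslant \sum_s s^m < \infty$ after noticing the bound must be strengthened — the honest argument is that $\PP(\tau_\delta = s)$ is a finite sum of products of at most $m$ geometric tails with parameters $\leqslant \eta$, hence $\eta^{-s}\PP(\tau_\delta = s)$ is summable (it behaves like $s^{m-1}$ times a product of ratios $\leqslant 1$, with strict inequality unless the whole pre-$\delta$ path lies in $\cF$, and even then polynomial growth is summable against nothing — so one actually needs the generating-function identity $\EE[\eta^{-\tau_\delta}] = \sum_{\text{paths to }\delta} \prod (\text{edge probs}) \prod (1 - \eta_j/\eta)^{-1}$, finite because there are finitely many paths).

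\medskip
\noindent\textbf{Step 3: the denominator and normalisation.} For $\PP(\tau > t)$, decompose over $\tau_\cF$ and the state $\delta^* \in \cF$ first hit: $\PP(\tau > t, \tau_\cF = s, \varSigma_s = \delta^*) = \PP(\tau_\cF = s, \varSigma_s = \delta^*)\,\PP(\tau > t - s \mid \varSigma_0 = \delta^*)$, and the last factor is $\eta^{t-s}(1 + o(1))$ by an inductive/bootstrapping application of the same analysis started from $\delta^*$ (every state reachable from a $\cF$-state that is again in $\cF$ also has sojourn $\eta$, and the ``escape'' states contribute lower order). Summing, $\PP(\tau > t) = \eta^t\big(\EE[\eta^{-\tau_\cF}; \tau_\cF < \infty] + o(1)\big)$. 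Dividing numerator by denominator gives the claimed ratio formula, and the inequalities $0 < \EE[\eta^{-\tau_\delta}; \tau_\delta < \infty] \leqslant \EE[\eta^{-\tau_\cF}; \tau_\cF < \infty] < \infty$ follow since $\tau_\cF \leqslant \tau_\delta$ pointwise on $\{\tau_\delta < \infty\}$ (so $\eta^{-\tau_\cF} \geqslant \eta^{-\tau_\delta}$ there — wait, this needs $\{\tau_\delta < \infty\} \subseteq \{\tau_\cF < \infty\}$ and $\tau_\cF \leqslant \tau_\delta$, giving $\eta^{-\tau_\cF} \leqslant \eta^{-\tau_\delta}$... one must track the direction carefully: $\eta < 1$ so $\eta^{-\tau}$ is increasing in $\tau$, hence $\tau_\cF \leqslant \tau_\delta$ gives $\eta^{-\tau_\cF} \leqslant \eta^{-\tau_\delta}$, and the inequality in the theorem then needs the sum over $\delta \in \cF$ — i.e. $\EE[\eta^{-\tau_\cF}; \tau_\cF<\infty] = \sum_{\delta \in \cF}\EE[\eta^{-\tau_\delta}; \tau_\delta = \tau_\cF < \infty] \geqslant \EE[\eta^{-\tau_\delta}; \tau_\delta = \tau_\cF]$, which is not quite $\EE[\eta^{-\tau_\delta}; \tau_\delta < \infty]$; the correct statement uses that on $\{\tau_\delta < \infty\}$ one has $\tau_\cF = \tau_\delta$ iff $\delta$ is the first $\cF$-state hit, but a path can re-enter $\cF$ later — no, $\varSigma$ is progressive and once in $\cF$ it can only move to other $\cF$-states or leave forever, so actually $\tau_\cF = \tau_\delta$ whenever $\tau_\delta < \infty$... this is the subtle point to get right). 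Finally, summing $\PP_{\textnormal{qlim}}^\varSigma(\delta)$ over $\delta \in \cF$ gives $1$, so it is a probability measure.

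\medskip
\noindent\textbf{Main obstacle.} The genuinely delicate part is Step~3's bootstrapping and the bookkeeping of first-hitting versus re-entry into $\cF$: one must verify that from any $\cF$-state the asymptotics $\PP(\tau > t) \sim c\,\eta^t$ hold with the \emph{same} base $\eta$ (no accidental cancellation or enhancement), and that the decomposition over $\tau_\cF$ is exhaustive at leading order. Controlling the error uniformly in the initial distribution — as the theorem demands — requires that all the $o(1)$ and $\cO((\eta')^t)$ terms have constants depending only on the (finite) chain, which is true but must be stated. Since this is quoted verbatim from \citet[Theorem~5.5]{Martinez}, in the paper itself one would simply cite that reference; the sketch above is how one would reconstruct the proof.
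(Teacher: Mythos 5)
Your overall strategy (decompose $\{\varSigma_t=\delta,\tau>t\}$ and $\{\tau>t\}$ over the hitting time of $\delta$ resp.\ $\cF$, extract the factor $\eta^t$, and identify the limits of the partial sums with $\EE[\eta^{-\tau_\delta};\tau_\delta<\infty]$ and $\EE[\eta^{-\tau_\cF};\tau_\cF<\infty]$) is indeed the skeleton of the argument behind this result; note, though, that the paper does not prove the theorem at all — it cites Mart\'inez, Theorem 5.5 — and only supplies a proof of the auxiliary Lemma~\ref{preabsorption}, which it explicitly flags as the crucial structural ingredient. This is precisely where your sketch has a genuine gap: you never state or use that lemma, namely that for every $\delta\in\cF$ one has $T^{\textnormal{ul}}_{\delta\delta}+T^{\textnormal{ul}}_{\delta\pmin}=1$, i.e.\ from a state of $\cF$ the chain can only stay put or be absorbed. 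In its place you invoke the incorrect substitute ``once in $\cF$ it can only move to other $\cF$-states or leave forever'', which does not yield the conclusions you need and leaves you visibly stuck at exactly the two delicate points.

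Concretely: (i) finiteness of $\EE[\eta^{-\tau_\delta};\tau_\delta<\infty]$ is \emph{not} obtained from the bound $\PP(\tau_\delta=s)=\cO(s^m\eta^s)$ (as you note, that only gives a divergent polynomial bound), nor from ``finitely many paths'' alone; one needs that every state visited strictly before $\tau_\delta$ lies outside $\cF\cup\{\pmin\}$, hence has sojourn probability at most $\eta'<\eta$ strictly, so that $\eta^{-s}\PP(\tau_\delta=s)$ decays geometrically. This exclusion of $\cF$ from the pre-$\tau_\delta$ path is an immediate consequence of Lemma~\ref{preabsorption} (a visit to $\delta''\in\cF$, $\delta''\neq\delta$, traps the chain in $\delta''$ until absorption, contradicting $\tau_\delta<\infty$), but your sketch never closes this loop — the phrase ``with strict inequality unless the whole pre-$\delta$ path lies in $\cF$, and even then polynomial growth is summable against nothing'' is not an argument. (ii) The same lemma gives, for free, the facts you label as ``the subtle point to get right'' and as requiring ``bootstrapping'': on $\{\tau_\delta<\infty\}$ one has $\tau_\cF=\tau_\delta$ and $\varSigma_{\tau_\cF}=\delta$, and for $\delta^*\in\cF$ the tail is exact, $\PP(\tau>u\mid\varSigma_0=\delta^*)=\eta^u$, so no inductive asymptotics from $\cF$-states is needed; the only remaining estimate is $\PP(\tau\wedge\tau_\cF>t)=\cO\big((\eta'')^t\big)$ for $\eta''\in(\eta',\eta)$, as in Lemma~\ref{bounds}. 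It also fixes the inequality chain you worried about, since $\EE[\eta^{-\tau_\delta};\tau_\delta<\infty]=\EE[\eta^{-\tau_\cF};\tau_\delta<\infty]\leqslant\EE[\eta^{-\tau_\cF};\tau_\cF<\infty]$, and the normalisation $\sum_{\delta\in\cF}\EE[\eta^{-\tau_\delta};\tau_\delta<\infty]=\EE[\eta^{-\tau_\cF};\tau_\cF<\infty]$. So the route is right, but without Lemma~\ref{preabsorption} (or an equivalent statement) the key steps of your proposal do not go through as written.
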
 

Recall that the labels of the different blocks evolve conditionally independently. Thus, we expect the quasi-limiting distribution of the LPP to be similar to the quasi-limiting distribution from Theorem~\ref{QLD_partitions}, garnished with the stationary distribution $q$ of the migration process. To be more explicit, we will prove the following result.

\begin{theorem} \label{QLD_migreco}
For all $\bdelta \in \LL \AS ({[n]})$,
\begin{equation*}
\lim_{t \to \infty} \PP(\bSigma_t = \bdelta \mid \tau > t) = \Big (\prod_{(d,\lambda) \in \bdelta} q(\lambda) \Big ) \PP_{\textnormal{qlim}}^\varSigma(\delta),
\end{equation*}
where $q$  is the unique stationary distribution \eqref{qqM} of the migration process.
\end{theorem}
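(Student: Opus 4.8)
The strategy is to condition on the time of absorption of the base and on the trajectory of the unlabelled chain, exploiting the conditional independence of the labels (Remark~\ref{branchingRW} and the factorisation of $T_{\bdelta\bepsilon}$ in Section~\ref{sec:stochastic}). Fix $\bdelta = \{(d_1,\lambda_1),\ldots,(d_k,\lambda_k)\} \in \LL\AS({[n]})$ with base $\delta = \{d_1,\ldots,d_k\}$. If $\delta \notin \cF$, then $\PP^\varSigma_{\textnormal{qlim}}(\delta) = 0$, and the claim follows from Proposition~\ref{LPPstationary} together with Theorem~\ref{QLD_partitions}, since the conditional probability $\PP(\bSigma_t = \bdelta \mid \tau > t)$ is bounded by $\PP(\varSigma_t = \delta \mid \tau > t) \to 0$. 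So the interesting case is $\delta \in \cF$, in particular $\delta \neq \pmin$.

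For $\delta \in \cF$, the main observation is that $\bSigma_t = \bdelta$ forces $\varSigma_t = \delta$, and, since $\delta$ is a sojourn state of maximal probability reached at the hitting time $\tau_\delta$, on the event $\{\tau > t\}$ with $t$ large the base has (with overwhelming conditional probability) already been sitting in $\delta$ for a long time. I would write
\begin{equation*}
\PP(\bSigma_t = \bdelta \mid \tau > t)
= \sum_{s} \PP(\tau_\delta = s \mid \tau > t)\,\PP(\bSigma_t = \bdelta \mid \tau_\delta = s,\ \tau > t),
\end{equation*}
and then argue that conditionally on $\{\tau_\delta = s,\ \tau > t\}$ with $s \leqslant t/2$, say, the block labels from time $s$ onward evolve as $k$ independent Markov chains with transition matrix $M$ run for time $t-s$ (this uses that $\delta\in\cF$ is a sojourn state, so conditioning on $\{\tau>t\}$ does not disturb the label dynamics once the base is frozen at $\delta$, only the base dynamics), so that
\begin{equation*}
\PP(\bSigma_t = \bdelta \mid \tau_\delta = s,\ \tau > t)
= \prod_{j=1}^k \bigl(M^{\,t-s}\bigr)(\lambda_{j}^{(s)},\lambda_j) \xrightarrow[t\to\infty]{} \prod_{j=1}^k q(\lambda_j),
\end{equation*}
uniformly in $s \leqslant t/2$ and in the label $\lambda_j^{(s)}$ of $d_j$ at time $s$, by primitivity of $M$. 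The contribution of the complementary event $\{\tau_\delta > t/2\}$ (equivalently, $\varSigma$ has not yet reached $\delta$ by time $t/2$) is negligible relative to $\PP(\tau>t)$: one shows $\PP(\tau_\delta > t/2,\ \tau>t) = o\!\left(\PP(\tau>t)\right)$ using the same counting-of-jumps estimate as in Lemma~\ref{geometricabsorption} — a path with $\tau_\delta > t/2$ and $\tau>t$ spends more than $t/2$ steps in states other than $\pmin$, at least one of which is not a maximal-sojourn state along the stretch before reaching $\delta$, giving an extra geometric factor strictly smaller than $\eta^{t/2}$, whereas $\PP(\tau>t)$ itself is of exact order $\eta^t$ up to polynomial factors (this last lower bound on $\PP(\tau>t)$ follows from $\delta\in\cF$ being reachable with sojourn probability exactly $\eta$). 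Since $\sum_s \PP(\tau_\delta = s \mid \tau>t) \to 1$, the weighted average of the label probabilities converges to $\prod_{(d,\lambda)\in\bdelta} q(\lambda)$, and the remaining mass, carried by the base event, converges to $\PP^\varSigma_{\textnormal{qlim}}(\delta)$ by Theorem~\ref{QLD_partitions}; more precisely, I would first factor $\PP(\bSigma_t=\bdelta\mid\tau>t) = \PP(\bSigma_t=\bdelta\mid\varSigma_t=\delta,\tau>t)\,\PP(\varSigma_t=\delta\mid\tau>t)$ and identify the first factor with the label term and the second with $\PP^\varSigma_{\textnormal{qlim}}(\delta)$ in the limit.

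\textbf{Main obstacle.} The delicate point is the uniformity and the justification that conditioning on $\{\tau>t\}$ — an event about the future of the base — does not bias the label chains. This is where one must use carefully that, once $\varSigma$ has entered a state $\delta\in\cF$, the labels of its blocks and the subsequent branching/absorption dynamics of the base are \emph{independent} given the base trajectory (the Markov transitions $T_{\bdelta\bepsilon}$ factor into a base part $T^{\textnormal{ul}}_{\delta\varepsilon}$ and a label part depending only on $M$), so that $\PP(\bSigma_t=\bdelta\mid\varSigma_{[s,t]}=\delta,\ \tau_\delta=s)$ really is a product of $M$-transition probabilities with no conditioning on $\tau$ beyond fixing the base path. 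Making this rigorous — conditioning on the full base path on $[0,t]$, summing out, and checking that the error terms are uniform in the initial distribution and in $\bdelta$ — is the part that needs care; the convergence of the $M$-powers to $q$ and the counting estimates are routine given Lemma~\ref{geometricabsorption} and primitivity.
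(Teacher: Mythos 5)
Your overall strategy is the one the paper follows (split according to whether the base has reached $\delta$ early, let the block labels equilibrate via powers of $M$, and feed the base marginal into Theorem~\ref{QLD_partitions}), but there is a genuine gap at its core. The structural fact you use implicitly throughout — that a state $\delta\in\cF$, once reached and not absorbed, cannot move, i.e.\ $T^{\text{ul}}_{\delta\delta}+T^{\text{ul}}_{\delta\pmin}=1$ — is not a triviality about ``sojourn states'' and is nowhere established in your argument. It is exactly what makes your displayed identity $\PP(\bSigma_t=\bdelta\mid\tau_\delta=s,\tau>t)=\prod_j(M^{t-s})(\lambda_j^{(s)},\lambda_j)$ correct: without it, conditionally on $\{\tau_\delta=s,\tau>t\}$ the base could leave $\delta$ for a strictly finer non-absorbing state, and the left-hand side picks up an extra factor $\PP(\varSigma_t=\delta\mid\tau_\delta=s,\tau>t)<1$. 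It is also what justifies your claim that the stretch of the path before $\tau_\delta$ is spent only in states of sojourn probability at most $\eta'<\eta$: a priori the chain could idle for order $t$ steps at a \emph{coarser} state that also lies in $\cF$ and jump to $\delta$ only near time $t$, which would both ruin your estimate and destroy the product-of-$q$'s limit, since the labels of the blocks of $\delta$ would not have had time to decorrelate. The paper isolates this as Lemma~\ref{preabsorption} and proves it by showing that any reachable $\delta$ with a transition to some $\varepsilon\notin\{\delta,\pmin\}$ admits a reachable refinement with strictly larger sojourn probability (using Assumption~\eqref{generalassumption}), contradicting maximality of $\eta$ on $\cF$; this lemma is the heart of the proof and must be supplied. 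By contrast, the point you flag as the main obstacle — that conditioning on $\{\tau>t\}$ does not bias the labels — is the easier part, since $\tau$ is a function of the base path alone and $T_{\bdelta\bepsilon}$ factors into a base part and a label part.

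There is a second, quantitative gap in the negligibility step. The claim $\PP(\tau_\delta>t/2,\tau>t)=o(\PP(\tau>t))$ is false whenever $\cF$ contains reachable states other than $\delta$: conditional on $\tau>t$ the base may settle in such a state and never hit $\delta$, an event of probability of order $\eta^t$, comparable to $\PP(\tau>t)$; what you actually need is $\PP(\bSigma_t=\bdelta,\ \tau_\delta>t/2\mid\tau>t)\to0$ (and, relatedly, $\sum_s\PP(\tau_\delta=s\mid\tau>t)\to1$ is false for the same reason — your final factorisation $\PP(\bSigma_t=\bdelta\mid\varSigma_t=\delta,\tau>t)\,\PP(\varSigma_t=\delta\mid\tau>t)$ is the correct way to organise the limit). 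Even for the corrected event, the bookkeeping ``an extra geometric factor strictly smaller than $\eta^{t/2}$, versus $\PP(\tau>t)\asymp\eta^t$'' does not close: being $o(\eta^{t/2})$ is not being $o(\eta^t)$ unless $\eta'<\eta^2$, which is not guaranteed. To repair it you must either keep the sojourn factor $\eta^{t-\tau_\delta}$ accumulated \emph{after} the hit, so that each $s=\tau_\delta>t/2$ contributes at most $\mathrm{poly}(t)\,(\eta'')^{s}\eta^{t-s}\leqslant\mathrm{poly}(t)\,\eta^t(\eta''/\eta)^{t/2}$ for $\eta''\in(\eta',\eta)$, or do what the paper does in Lemma~\ref{bounds}: bound $\PP(\tau_\cF\wedge\tau>s)\leqslant C(\eta'')^{s}$ and cut at $\gamma t$ with $\gamma$ chosen close enough to $1$ that $(\eta'')^{\gamma}<\eta$, rather than at $t/2$. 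With Lemma~\ref{preabsorption} added and the estimate repaired in one of these ways, your argument becomes essentially the paper's proof.
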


\begin{remark}\label{QLDsignificance}
In Theorem~\ref{MREasymptotics}, we have approximated the solution of the MRE~\eqref{recomig_ode} by using Proposition~\ref{LPPstationary} to approximate the distribution of the labelled partitioning process by its limiting distribution. We can try to improve on this rather coarse estimate by also taking into account the quasi-limiting distribution; at least in principle, the disintegration
\begin{equation*}
\PP ( \bSigma_t = \bdelta) = \PP( \bSigma_t = \bdelta \mid \tau \leqslant t) \PP(\tau \leqslant t) + \PP( \bSigma_t = \bdelta \mid \tau >t ) \PP( \tau > t)
\end{equation*}
allows us to express the error term in Theorem~\ref{MREasymptotics} via the quasi-limiting distribution, at least when migration is strong compared to recombination. Acquiring precise asymptotics, however, would require more detailed knowledge about the probability $\PP(\tau > t)$ and the rate of convergence of the conditional distribution $\PP( \bSigma_t = \bdelta \mid \tau >t )$ to the quasi-limiting distribution.
\end{remark}

At the heart of the proof is the observation that further refinement of any $\delta \in \cF$ immediately leads to absorption; this was also one of the crucial ingredients in the proof of Theorem~\ref{QLD_partitions}, see \citet[Theorem 5.5]{Martinez} for the original reference\footnote{This result is also implicit in Lemma 5.1 of the corresponding corrigendum; for completeness, we present its proof.}.

\begin{lemma}\label{preabsorption}
For all $\delta \in \cF$, we have
\begin{equation}\label{secondlast}
T^{\rm{ul}}_{\delta\delta} + T^{\rm{ul}}_{\delta\pmin} = 1.
\end{equation}
\end{lemma}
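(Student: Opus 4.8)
The plan is to unpack the definition of $\eta$ and the set $\cF$, and to show that any state $\delta\in\cF$ is so fine that a single further split necessarily reaches $\pmin$. Concretely, $\eta$ is the maximal sojourn probability over $\AS([n])\setminus\{\pmin\}$, and $\delta\in\cF$ means $\delta$ is reachable from $\pmax$ and attains this maximum, i.e. $T^{\rm ul}_{\delta\delta}=\eta$. Since $T^{\rm ul}$ is stochastic, Eq.~\eqref{secondlast} is equivalent to the claim that $T^{\rm ul}_{\delta\varepsilon}=0$ for every $\varepsilon$ with $\varepsilon\preccurlyeq\delta$ and $\varepsilon\notin\{\delta,\pmin\}$; that is, there is no intermediate state strictly between $\delta$ and $\pmin$ that the chain can move to in one step.

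The key step is an extremality argument. Suppose, for contradiction, that there is such an intermediate $\varepsilon$ with $T^{\rm ul}_{\delta\varepsilon}>0$, $\pmin\precneqq\varepsilon\precneqq\delta$. Using the product formula~\eqref{Tulentries}, $T^{\rm ul}_{\delta\varepsilon}=\prod_{d\in\delta}r^d_{\varepsilon|_d}$, so $T^{\rm ul}_{\delta\varepsilon}>0$ forces $r^d_{\varepsilon|_d}>0$ for every block $d$ of $\delta$; and since $\varepsilon\neq\delta$, there is at least one block $d_0$ with $\varepsilon|_{d_0}\precneqq\{d_0\}$, while since $\varepsilon\neq\pmin$, there is a block $d_1$ (not necessarily distinct from $d_0$) with $\varepsilon|_{d_1}$ not the all-singletons partition of $d_1$. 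I would first establish that $\varepsilon$ is itself reachable (it is, being one step from the reachable $\delta$), and that $\varepsilon\neq\pmin$, hence $T^{\rm ul}_{\varepsilon\varepsilon}\leqslant\eta$ by maximality of $\eta$. Then I compare $T^{\rm ul}_{\varepsilon\varepsilon}$ and $T^{\rm ul}_{\delta\delta}$: by~\eqref{Tulentries}, $T^{\rm ul}_{\delta\delta}=\prod_{d\in\delta}r^d_{\pmax_d}$ and $T^{\rm ul}_{\varepsilon\varepsilon}=\prod_{e\in\varepsilon}r^e_{\pmax_e}$. The point is that refining $\delta$ to $\varepsilon$ can only increase the staying-put probability, because marginal recombination probabilities satisfy $r^d_{\pmax_d}\leqslant r^e_{\pmax_e}$ whenever $e\subseteq d$ — indeed, $r^e_{\pmax_e}=\sum_{\delta':\,\delta'|_e=\pmax_e} r_{\delta'}$ sums over a strictly larger index set (every $\delta'$ that does not split $d$ also does not split $e$, but not conversely) — and moreover the inequality is \emph{strict} for the block $d_0$ that $\varepsilon$ genuinely refines, precisely because some $\delta$ with $r_\delta>0$ splits $d_0$ in a way compatible with $\varepsilon|_{d_0}$ (this is exactly what $r^{d_0}_{\varepsilon|_{d_0}}>0$ together with reachability gives; one may also need to invoke the general assumption~\eqref{generalassumption} to guarantee the existence of a separating partition). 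This yields $T^{\rm ul}_{\varepsilon\varepsilon}>T^{\rm ul}_{\delta\delta}=\eta$, contradicting maximality.

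The main obstacle I anticipate is pinning down the \emph{strict} inequality cleanly: one must argue that when $\varepsilon$ properly refines a block $d_0$ of $\delta$ and $\delta\in\cF$, there genuinely is a positive-probability partition that separates $d_0$ in a manner consistent with the refinement, so that the extra index in the sum defining $r^{d_0}_{\pmax_{d_0}}$ versus $r^{e}_{\pmax_e}$ carries positive weight. This is where reachability of $\delta$ and the structure of the recombination distribution (possibly via~\eqref{generalassumption}, ensuring $r_{\pmin}<1$ so that $\eta>0$ and non-trivial splits exist) enter. Once the strict monotonicity $T^{\rm ul}_{\varepsilon\varepsilon}>\eta$ is in hand, the contradiction with the definition of $\eta$ is immediate, and hence no intermediate $\varepsilon$ exists, giving $T^{\rm ul}_{\delta\delta}+T^{\rm ul}_{\delta\pmin}=\sum_{\varepsilon\preccurlyeq\delta}T^{\rm ul}_{\delta\varepsilon}=1$ by stochasticity of $T^{\rm ul}$. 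I would close by remarking that this is precisely the ``one further split leads straight to absorption'' phenomenon flagged before the lemma.
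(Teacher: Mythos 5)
Your reduction of \eqref{secondlast} to ``no intermediate one-step target exists'' and your idea of contradicting the maximality of $\eta$ are in the spirit of the paper, but the comparison you base this on is false. You claim that refining $\delta$ to $\varepsilon$ ``can only increase the staying-put probability'' because $r^{d}_{\{d\}}\leqslant r^{e}_{\{e\}}$ whenever $e\subseteq d$. The factor-wise inequality is correct, but it does not imply $T^{\rm{ul}}_{\varepsilon\varepsilon}\geqslant T^{\rm{ul}}_{\delta\delta}$: by \eqref{Tulentries}, $T^{\rm{ul}}_{\varepsilon\varepsilon}=\prod_{e\in\varepsilon}r^{e}_{\{e\}}$ is a product over \emph{more} blocks than $T^{\rm{ul}}_{\delta\delta}=\prod_{d\in\delta}r^{d}_{\{d\}}$, and since each factor is at most $1$, a product of several larger factors can be strictly smaller than a product of fewer smaller ones. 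The paper makes exactly this point in the remark immediately following the lemma: for $n=4$ with $r_{\pmin}=\frac{1}{2}$, $r_{\{\{1,2\},\{3,4\}\}}=\frac{1}{10}$, $r_{\pmax}=\frac{2}{5}$, the refinement $\{\{1,2\},\{3,4\}\}$ of $\pmax$ has sojourn probability $\frac{1}{4}<\frac{2}{5}$, so sojourn probabilities are \emph{not} nondecreasing under refinement. Hence your key step $T^{\rm{ul}}_{\varepsilon\varepsilon}>T^{\rm{ul}}_{\delta\delta}=\eta$ does not follow from what you establish; the real obstacle is not the strictness issue you flag, but the direction of the product comparison itself.

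The paper's proof avoids this by comparing $T^{\rm{ul}}_{\delta\delta}$ not with the sojourn probability of $\varepsilon$ itself, but with that of the state $\varepsilon'=\{e\}\cup\{\{i\}:i\in[n]\setminus e\}$, where $e$ is a block of $\varepsilon$ with $\lvert e\rvert>1$. Since singleton blocks contribute trivial factors, $T^{\rm{ul}}_{\varepsilon'\varepsilon'}=r^{e}_{\{e\}}$ is a \emph{single} factor, which can be played off against the whole product $T^{\rm{ul}}_{\delta\delta}$: either $e$ is strictly contained in the block $\tilde d$ of $\delta$ containing it, in which case $r^{e}_{\{e\}}>r^{\tilde d}_{\{\tilde d\}}\geqslant T^{\rm{ul}}_{\delta\delta}$ (the strict step being essentially your observation that $r^{\tilde d}_{\varepsilon|_{\tilde d}}>0$ supplies a positive-weight $\delta'$ splitting $\tilde d$ but not $e$); or $e=\tilde d$, and then $\delta$ must have a second non-singleton block $d$, for which $r^{d}_{\{d\}}<1$ by Assumption~\eqref{generalassumption}, so again $T^{\rm{ul}}_{\delta\delta}<r^{e}_{\{e\}}$. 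One must also verify that $\varepsilon'$ is reachable, which your argument does not address (reachability of $\varepsilon$ one step from $\delta$ is not enough; for $\varepsilon'$ one uses \eqref{generalassumption} within the remaining blocks together with $r^{e}_{\{e\}}>0$). With your comparison state replaced by such an $\varepsilon'$ and the case distinction above, your extremality argument goes through; as written, it has a genuine gap.
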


\begin{proof}
We show that, for all $\delta \in \AS^\downarrow({[n]})$ with $T^{\rm{ul}}_{\delta\delta} + T^{\rm{ul}}_{\delta\pmin} \neq 1$, one has $\delta \notin \cF$. Indeed, for any such $\delta$, there is an $\varepsilon \notin \{\pmin, \delta\}$ with $T^{\rm{ul}}_{\delta\varepsilon} > 0$. For any such $\varepsilon$, there is at least one block $e \in \varepsilon$ with $\lvert e \rvert>1$. For any such $e$, the partition
\[
  \varepsilon' \defeq \{e\} \cup\big  \{ \{i \} : i \in {[n]} \setminus e \big \}  \prec \delta
\]
is reachable by Assumption~\eqref{generalassumption} (with ${[n]}$ replaced by individual blocks of $\delta$).  We then have
\[
T^{\text{ul}}_{\varepsilon'\varepsilon'} = r^e_{\{e\}} > \, r^{\tilde d}_{\{\tilde d\}} \prod_{\substack{d \in \delta \\ d \neq \tilde d, \lvert d \rvert>1}} r^d_{\{d\}}
=  \prod_{d \in \delta} r^d_{\{d\}} = T^{\text{ul}}_{\delta\delta}, 
\]
where $\tilde d$ is the block in $\delta$ that contains $e$.
The inequality is true since $\varepsilon' \prec \delta$ implies that either $\lvert e \rvert < \lvert \tilde d \rvert$, in which case $r^e_{\{e\}} > r^{\tilde d}_{\{\tilde d\}}$; or $\lvert \{d \in \delta: \lvert d \rvert > 1 \}\rvert > 1$, which entails that the constrained product is not empty (note that $r^d_{\{d\}}<1$ for $d$ with $\lvert d \rvert > 1$). We have thus proved that $\delta \notin \cF$.
\end{proof}

\begin{remark}
One might be tempted to assume that the sojourn probability is nondecreasing along every path
\begin{equation*}
\pmax \succcurlyeq \delta_1 \succcurlyeq \delta_2 \succcurlyeq \ldots \succcurlyeq \pmin
\end{equation*}
from the maximal partition to the absorbing state. To illustrate that this is not true in general, consider the following setup. Let $n = 4$ and assume the recombination distribution given by $r^{}_{\pmin} = \frac{1}{2}$, $r^{}_{\{\{1,2\},\{3,4\}\}} = \frac{1}{10}, r^{}_{\pmax} = \frac{2}{5}$ and $r^{}_\delta = 0$ otherwise. Then, the sojourn probability of the state $\pmax$ is $r^{}_{\pmax} = \frac{2}{5}$, while the (finer) state $\{\{1,2\},\{3,4\}\}$ has the smaller sojourn probability 
\[
r^{\{1,2\}}_{\{1,2\}} r^{\{3,4\}}_{\{3,4\}} = (1-r^{}_{\pmin})^2 = \frac{1}{4}.
\] 
\hfill $\diamondsuit$
\end{remark}

The idea of the proof of Theorem~\ref{QLD_migreco} is simple. First, notice that Lemma \ref{preabsorption} implies that conditional on non-absorption, 
$\varSigma$ remains constant after $\tau_{\cF}^{}$. From then on, the labels keep on evolving independently according to $M$, and their distributions converge to $q$. To make this rigorous, we just need to make sure that $t - \tau_\cF^{}$ is large enough (conditional on non-absorption). This is the content of the next Lemma.

\begin{lemma}\label{bounds}

\begin{enumerate}[label=\textnormal{(\alph*)}]
\item \label{lowerbound}
There exists $c > 0$ such that $\PP(\tau > t) \geqslant c \eta^t$ for all $t \in \NN$.
\item \label{upperbound}
Let $\eta' \defeq \max_{\delta \in \AS({[n]}) \setminus (\cF \cup \{\pmin\})} T^{\rm{ul}}_{\delta\delta}$. Then, for all $\eta'' > \eta'$, there exists $C > 0$ such that $\PP(\tau^{}_\cF \wedge \tau > t) \leqslant C (\eta'')^t$ for all $t \in \NN$.
\item \label{consequence}
There is a $\gamma \in (0,1)$ such that $\lim_{t \to \infty} \PP(\tau_{\cF}^{} > \gamma t \mid \tau > t) = 0$. 
\end{enumerate}
\end{lemma}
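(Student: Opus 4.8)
The plan is to prove parts~\ref{lowerbound}, \ref{upperbound}, \ref{consequence} in this order, with part~\ref{consequence} obtained as a quotient of the two bounds. We may assume $\pmax \notin \cF$: otherwise $\tau_{\cF} \equiv 0$, so parts~\ref{upperbound} and \ref{consequence} are trivial, while part~\ref{lowerbound} holds with $c=1$ (keep the chain at $\pmax$). For part~\ref{lowerbound} the idea is to steer the chain into $\cF$ and then let it dwell there. By Theorem~\ref{QLD_partitions}, $\cF \neq \varnothing$; pick $\delta^{*} \in \cF$, which is then reachable from $\pmax$ by the definition of $\AS^{\downarrow}([n])$, and note $\delta^{*} \neq \pmin$ since $T^{\rm{ul}}_{\pmin\pmin} = 1 > \eta$. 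Fix a path $\pmax = \gamma_{0}, \gamma_{1}, \ldots, \gamma_{\ell_{0}} = \delta^{*}$ with $T^{\rm{ul}}_{\gamma_{i}\gamma_{i+1}} > 0$ for all $i$, and set $p_{0} \defeq \prod_{i} T^{\rm{ul}}_{\gamma_{i}\gamma_{i+1}} > 0$; as $\pmin$ is absorbing and $\delta^{*} \neq \pmin$, no $\gamma_{i}$ equals $\pmin$. For $t \geq \ell_{0}$, forcing the chain along this path and then keeping it at $\delta^{*}$ stays away from $\pmin$ up to time $t$, whence $\PP(\tau > t) \geq p_{0}\,(T^{\rm{ul}}_{\delta^{*}\delta^{*}})^{t-\ell_{0}} = p_{0}\,\eta^{\,t-\ell_{0}} \geq p_{0}\,\eta^{t}$ (using $\eta \leq 1$); for $t < \ell_{0}$ one has $\PP(\tau > t) \geq \PP(\tau > \ell_{0}) \geq p_{0} \geq p_{0}\,\eta^{t}$. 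Thus part~\ref{lowerbound} holds with $c = p_{0}$.

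For part~\ref{upperbound} I would adapt the finitely-many-jumps estimate from the proof of Lemma~\ref{geometricabsorption}. Write $\theta \defeq \tau_{\cF} \wedge \tau$ and let $\widetilde{\eta}' \defeq \max\bigl\{ T^{\rm{ul}}_{\delta\delta} : \delta \in \AS^{\downarrow}([n]) \setminus (\cF \cup \{\pmin\}) \bigr\}$, the maximal sojourn probability over the \emph{reachable} states other than those in $\cF \cup \{\pmin\}$. The crucial point is that $\widetilde{\eta}' < \eta$: every reachable state has sojourn probability $\leq \eta$, and the reachable ones attaining $\eta$ are, by the definition of $\cF$, exactly the elements of $\cF$; also $\widetilde{\eta}' \leq \eta'$ trivially. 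On the event $\{\theta > t\}$ the chain remains, up to time $t$, in reachable states outside $\cF \cup \{\pmin\}$, so every sojourn step has probability $\leq \widetilde{\eta}'$; and, being a process of strict progressive refinement on the finite poset $\AS([n])$, it makes at most $m' \defeq n-1$ jumps before time $\theta$. Bounding the probability of a trajectory with jumps at a prescribed set of $j$ times by $(\widetilde{\eta}')^{t-j}$ times a product of $j$ jump probabilities, and summing the latter to at most $1$ via substochasticity of the off-diagonal part of $T^{\rm{ul}}$, gives $\PP(\theta > t) \leq \sum_{j=0}^{m'}\binom{t}{j}(\widetilde{\eta}')^{t-j} \leq C' t^{m'}(\widetilde{\eta}')^{t}$. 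Since $\widetilde{\eta}' \leq \eta' < \eta''$, the elementary estimate $t^{m'}(\widetilde{\eta}')^{t} = \bigo{(\eta'')^{t}}$, together with absorbing finitely many initial $t$ into the constant, gives $\PP(\theta > t) \leq C(\eta'')^{t}$ for all $t \in \NN$. The same computation in fact yields $\PP(\theta > t) \leq C_{\sharp}(\eta^{\sharp})^{t}$ for every $\eta^{\sharp} \in (\widetilde{\eta}', 1)$, a sharper form used next.

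For part~\ref{consequence}, fix $\eta^{\sharp} \in (\widetilde{\eta}', \eta)$ (possible since $\widetilde{\eta}' < \eta$) and choose $\gamma \in (0,1)$ close enough to $1$ that $(\eta^{\sharp})^{\gamma} < \eta$; this is possible because $(\eta^{\sharp})^{\gamma} \to \eta^{\sharp} < \eta$ as $\gamma \uparrow 1$. Since $\gamma t \leq t$, we have $\{\tau_{\cF} > \gamma t,\ \tau > t\} \subseteq \{\theta > \gamma t\}$, so by part~\ref{lowerbound} and the sharp form of part~\ref{upperbound},
\[
\PP(\tau_{\cF} > \gamma t \mid \tau > t) \;=\; \frac{\PP(\tau_{\cF} > \gamma t,\ \tau > t)}{\PP(\tau > t)} \;\leq\; \frac{C_{\sharp}\,(\eta^{\sharp})^{\lfloor \gamma t\rfloor}}{c\,\eta^{t}} \;\leq\; \frac{C_{\sharp}}{c\,\eta^{\sharp}}\,\Bigl(\frac{(\eta^{\sharp})^{\gamma}}{\eta}\Bigr)^{t},
\]
and the base of the last power is $<1$, so the right-hand side tends to $0$ as $t \to \infty$. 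This is part~\ref{consequence}.

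The only genuinely delicate point is the strict inequality $\widetilde{\eta}' < \eta$: it must be read off the \emph{reachable} part of the state space, where by the definition of $\cF$ a sojourn probability equal to $\eta$ is attained only inside $\cF$ --- whereas $\eta' < \eta$ itself may fail if some unreachable partition happens to have sojourn probability exactly $\eta$. This is why I route part~\ref{upperbound} through $\widetilde{\eta}'$ and appeal to $\widetilde{\eta}' \leq \eta'$ only at the end. Everything else --- the path construction in part~\ref{lowerbound}, the jump count and binomial estimate in part~\ref{upperbound} (essentially that of Lemma~\ref{geometricabsorption}), and the arithmetic of part~\ref{consequence} --- is routine.
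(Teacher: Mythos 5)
Your proof is correct and follows the same overall architecture as the paper's: steer the chain into $\cF$ and let it dwell there for the lower bound, a finitely-many-jumps geometric estimate for the upper bound, and part~\ref{consequence} as a quotient of the two. The local differences are worth recording. For~\ref{lowerbound}, the paper conditions on $\{\tau_\cF^{}=t_0\}$ and uses Lemma~\ref{preabsorption} to evaluate $\PP(\tau>t\mid\tau_\cF^{}=t_0)=\eta^{t-t_0}$ exactly, whereas you force an explicit path to a fixed $\delta^*\in\cF$ and then hold the chain there; this yields the same bound without invoking Lemma~\ref{preabsorption}. For~\ref{upperbound}, the paper couples $\varSigma$ with a Bernoulli counting process that dominates the number of jumps, while you rerun the binomial path-counting estimate of Lemma~\ref{geometricabsorption}; these are equivalent in substance. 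The genuinely valuable deviation is your constant $\widetilde{\eta}'$, the maximal sojourn probability over \emph{reachable} states outside $\cF\cup\{\pmin\}$. The paper's derivation of~\ref{consequence} starts by fixing $\eta''\in(\eta',\eta)$, which tacitly assumes $\eta'<\eta$; since $\eta'$ is a maximum over all of $\AS([n])$ while $\cF$ collects only the \emph{reachable} states with sojourn probability $\eta$, one can in fact have $\eta'=\eta$. For instance, with $n=4$ and $r$ supported (with equal weight, say) on $\{\{1,2,3\},\{4\}\}$, $\{\{3,4\},\{1\},\{2\}\}$ and $\pmin$, the partition $\{\{1,2\},\{3\},\{4\}\}$ has sojourn probability $\eta$ but is unreachable, so $\eta'=\eta$ and the interval $(\eta',\eta)$ is empty. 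Your route --- proving the sharper bound $\PP(\tau_\cF^{}\wedge\tau>t)\leqslant C_\sharp(\eta^\sharp)^t$ for every $\eta^\sharp>\widetilde{\eta}'$, noting that $\widetilde{\eta}'<\eta$ holds by the very definition of $\cF$, and using $\widetilde{\eta}'\leqslant\eta'$ only to recover the stated form of~\ref{upperbound} --- closes this gap and makes~\ref{consequence} go through in all cases; in this respect your argument is more careful than the paper's.
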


\begin{proof}
First, we show~\ref{lowerbound}. By definition, $\cF \subseteq \AS^\downarrow(I)$. Thus, there exists a $t_0 \in \NN$ such that $\PP(\tau_\cF^{} = t_0) > 0$. Then, we have for all $t \geqslant t_0$ that  
\begin{equation*}
\PP(\tau > t) \geqslant\PP(\tau > t, \tau_\cF^{} = t_0) = \PP(\tau > t \mid \tau_\cF^{} = t_0) \, \PP(\tau_\cF^{} = t_0) = c' \eta^{t - t_0} = (c' \eta^{-t_0} )\eta^t
\end{equation*}
with $c' = \PP(\tau_\cF^{} = t_0)$.
Note that we used Lemma~\ref{preabsorption} in the second-last step. Now, simply choose
\begin{equation*}
c \defeq \min \Bigl \{ \frac{\PP(\tau > t)}{\eta^t} :  0 \leqslant t \leqslant t_0 \Bigr \}\cup  \big \{c' \eta^{-t_0} \big \}.
\end{equation*}

For the proof of~\ref{upperbound}, we  couple $(\varSigma_t)_{t \in \NN_0}$ to another process $(N_t)_{t \in \NN_0}$ with values in \mbox{$\NN_0 \cup \{\infty\}$} and $N_0=0$. It evolves as follows. When $\varSigma_{t+1} =  \varSigma_t$, then $N_{t+1}\defeq N_t$ and when \mbox{$\varSigma_{t+1} \in \cF \cup \{ \pmin \}$}, we set $N_{t+1} \defeq \infty$. In all other cases, we perform a Bernoulli experiment with success probability
\begin{equation*}
\frac{1 - \eta'}{1 - T^{\text{ul}}_{\varSigma_t \varSigma_t}}.
\end{equation*} 
Upon success, we set $N_{t+1} \defeq N_t + 1$; otherwise, $N_{t+1} \defeq N_t$. 
Note that the marginal $(N_t)_{t \in \NN_0}$ of the coupling $(\varSigma_t,N_t)_{t \in \NN_0}$ stochastically dominates a process that has independent Bernoulli increments with parameter $1 - \eta'$. 

As we have argued before, the partitioning process can only jump a finite number of times before hitting either $\pmin$ or $\cF$. Thus, there is a positive integer $m$ such that, for all $t \in \NN$, $\tau \wedge \tau^{}_\cF > t$ implies $N_t \leqslant m$. Thus,
\begin{equation*}
\PP(\tau \wedge \tau_\cF^{} > t) \leqslant\PP(N_t \leqslant m) \leqslant \sum_{k = 0}^{m} \binom{t}{k} (1-\eta')^k (\eta')^{t - k} = P(t) (\eta')^t < C (\eta'')^t,
\end{equation*}
where $P(t)$ is a polynomial with degree $\leqslant m$, and $C$ and $\eta''$ are as stated.

Finally,~\ref{consequence} is a straightforward consequence of~\ref{lowerbound} and~\ref{upperbound}; first, fix any $\eta'' \in (\eta', \eta)$. Then, choose $\gamma$ such that $(\eta'')^\gamma < \eta$.
\end{proof}

After these preparations, the proof of Theorem~\ref{QLD_migreco} is not difficult.

\begin{proof}[Proof of Theorem~\textnormal{\ref{QLD_migreco}}]
Choose $\gamma$ as in~\ref{consequence} of Lemma~\ref{bounds}. We split
\begin{equation*}
\PP(\bSigma_t = \bdelta \mid \tau > t) = \PP(\bSigma_t = \bdelta, \tau_\cF^{} > \gamma t \mid \tau > t) + \PP(\bSigma_t = \bdelta, \tau_\cF^{} \leqslant \gamma t \mid \tau > t),
\end{equation*}
 The first probability tends to zero as $t \to \infty$, due to our choice of $\gamma$.  The second can be rewritten as
\begin{equation*}
\PP(\bSigma_t = \bdelta \mid \tau > t, \tau_\delta^{} \leqslant \gamma t)  \PP(\varSigma_t = \delta, \tau_\cF^{} \leqslant \gamma t \mid \tau > t),
\end{equation*}
where we have used that Lemma \ref{preabsorption} implies  $\{\tau>t,\tau^{}_\cF \leqslant \gamma t, \varSigma_t = \delta\}=\{\tau>t,\tau^{}_\delta \leqslant \gamma t\}$.
Here, the second factor converges to $\PP_{\text{qlim}}^\varSigma (\delta)$ by the choice of $\gamma$ and Lemma~\ref{bounds} \ref{consequence}.

Now consider the first factor. Together with $\tau > t$ and Lemma~\ref{preabsorption}, $\tau_\delta^{} \leqslant\gamma t$ implies  that $\varSigma_{s} = \delta$ for all $s$ between $\gamma  t$ and $t$.  
During this period, the labels of the blocks of $\delta$ evolve independently, and by the uniform convergence to the stationary distribution $q$, we obtain
\begin{equation*}
\lim_{t \to \infty} \PP(\bSigma_t = \bdelta \mid \tau > t, \tau_\delta^{} \leqslant \gamma t) = \prod_{(d,\lambda) \in \bdelta} q(\lambda),
\end{equation*} 
which completes the argument. For additional details, see also the proof of Proposition~\ref{LPPstationary}.
\end{proof}

\section{Recombination and migration in continuous time}
\label{sec:continuoustime}
Let us close by briefly discussing how our results carry over from the discrete-time to the continuous-time setting. To distinguish the notation from the discrete-time setting, we write $\omega = (\omega_t)_{t \geqslant 0}$ instead of $\mu = (\mu_t)_{t \in \NN_0}$. The recombination distribution $r$ is replaced by a collection $\varrho = (\varrho_\delta)_{\delta \in \SSS([n])}$ of non-negative recombination \emph{rates}; for each $\delta \in \AS([n])$, each individual is between time $t$ and $t + \dd t$ replaced by a new offspring that is recombined according to $\delta$,  with probability $\varrho_\delta^{} \dd t$.

Instead of the stochastic backward migration matrix, we use a Markov generator $N$ on $[n]$; between time $t$ and $t + \dd t$ and for $\alpha \neq \beta$, an individual at location $\alpha$ is, with probability $N(\alpha,\beta) \dd t$,  replaced by an individual from location $\beta$; we assume that this happens independently of recombination.

Putting this together means for the type distribution that we replace $\omega_t^{}(\alpha)$ by the convex combination
\begin{equation*}
\bigg (1 - \sum_{\delta \in \SSS([n])} \varrho_\delta^{} \dd t  - \sum_{\beta \neq \alpha} N(\alpha,\beta) \dd t \bigg ) \omega_t^{} + \sum_{\delta \in \SSS(I)} \varrho_\delta^{} \cR_\delta^{} (\omega_t^{}) \dd t + \sum_{\beta \neq \alpha} N(\alpha,\beta) \omega_t^{}(\beta) \dd t.
\end{equation*}
In other words,
\begin{equation}\label{continuousmigreco}
\dot{\omega}_t^{}(\alpha) = \sum_{\beta \in L} N(\alpha,\beta) \omega_t^{} (\beta) + \sum_{\delta \in \SSS([n])} \!  \varrho_\delta^{} \big ( \cR_\delta - \id) \omega_t^{}(\alpha);
\end{equation}
note that we have used that $N(\alpha,\alpha) = -\sum_{\beta \neq \alpha} N(\alpha, \beta)$ since $N$ is a Markov generator. The backward view can be easily adapted as follows. Again, we have an LPP (this time in continuous time) $\bSigmacont = (\bSigmacont_t)_{t \geqslant 0}$. It evolves as follows. At rate $\varrho_\varepsilon^{}$ for all $\varepsilon$, each labelled block $(d,\lambda)$ of $\bSigmacont_t$ is split into the blocks of the induced partition $\varepsilon|^{}_d$; each of these fragments inherits the label $\lambda$. In addition and independently, for every $\alpha \in L$, each block with label $\alpha$ is relabelled $\beta$ at rate $N(\alpha,\beta)$. Somewhat more formally, $\bSigmacont$ is a Markov chain in continuous time with generator $\bcQ$ defined by its nondiagonal elements
\begin{equation*}
\bcQ_{\bdelta \bepsilon} = \begin{cases}
\varrho_{\varepsilon|_d}^d, & \text{if } \bepsilon = ( \bdelta \setminus \{(d,\lambda)\} ) \cup \varepsilon|_d \times \{\lambda\} \text{ for some } d \in \delta, \\
N(\alpha,\beta), & \text{if } \bepsilon = ( \bdelta \setminus \{(d,\alpha)\} ) \cup \{(d,\beta)\} \text{ for some } d \in \delta, \\
0, & \text{otherwise},
\end{cases}
\end{equation*}
where the marginal recombination \emph{rates} are defined in analogy with the marginal recombination \emph{probabilities} (compare Eq.~\eqref{marginalrecombinationprobabilities}):
\begin{equation*}
\varrho^d_{\varepsilon} = \sum_{\substack{\varepsilon' \in \SSS([n]) \\ \varepsilon'|^{}_d = \varepsilon}} \varrho_{\varepsilon'}^{}.
\end{equation*}
Note that, in the case without migration and with recombination restricted to single crossovers, that is, to partitions of the form $\{[1:i],[i+1,n]\}$ for some $1 \leqslant i < n$, the continuous-time backward dynamics (and thus, by duality, the forward dynamics; see Eq.~\eqref{continuoustimeduality} below) has a simple explicit solution, which is due to the fact that crossover events ``rain down" on sequences in an independent Poissonian fashion~\citep{recoreview}. See also~\citet{LambertSchertzer} for the (much more involved) extension to the case with (a small amount of) coalescence in the infinite-sequence limit. 

But let us return to the full equation~\eqref{continuousmigreco}.
As before (compare Theorem~\ref{duality}), one can prove the duality relation
\begin{equation}\label{continuoustimeduality}
\cR_\bdelta^{} (\omega_t^{}) = \EE [\cR_{\bSigmacont_t}^{} (\omega_0^{}) \mid \bSigmacont_0 = \bdelta],
\end{equation}
whence we obtain the solution
\begin{equation}\label{matrixexponential}
\omega_t^{}(\alpha) = \sum_{\bdelta \in \LL \SSS([n])} (\ee^{t \bcQ})_{\bpmax^\alpha \bdelta} \cR_\bdelta^{} (\omega_0^{})
\end{equation}
by solving the associated (linear) Kolmogorov backward equation, in perfect analogy to Theorem~\ref{linearisation}. The duality relation can be proved by a straightforward adaptation of the techniques of~\citet{haldane}. Indeed, it was shown there that the recombination part of Eq.~\eqref{continuousmigreco} is dual to the splitting (or branching) part of $\bSigmacont$. Showing that the migration part is dual to the random walk defined by $N$ is a standard exercise.

Because Eq.~\eqref{matrixexponential} is not very concrete, let us derive a more explicit solution formula for the special case $n = 2$. We give a probabilistic argument, analogous to Eq.~\eqref{discreteexample}. First, note that with probability $\ee^{-\varrho^{}_{\pmin} t}$, the sites are not separated until time $t$, that is, $\Sigmacont_t = \pmax$; the single block has performed a random walk with transition kernel $N$ for the entire duration $t$. Hence, in this case, $\omega_t^{}(\alpha) = (\ee^{t N} \omega_0^{})(\alpha)$. On the other hand, if the blocks have been split at time $\sigma \in [0,t]$, then both sites have performed independent random walks, starting at time $\sigma$ at the location $\gamma$ where the split took place, and the solution is given by
$(\ee^{(t - \sigma) N} \omega^{}_0)^{\{1\}}(\gamma) \otimes (\ee^{(t - \sigma) N} \omega^{}_0)^{\{2\}}(\gamma)$.
Integrating over all possible values for $\sigma$ (keeping in mind that $\sigma$ is exponentially distributed with mean $1 / \varrho_{\pmin}^{}$) and $\gamma$ (keeping in mind that, at the moment of splitting, the block has label $\gamma$ with probability $(\ee^{\sigma N})(\alpha, \gamma)$), we obtain
\begin{equation} \label{continuousexample}
\omega_t^{}(\alpha) = \ee^{-\varrho_{\pmin}^{} t}  (\ee^{t N} \omega_0^{})(\alpha) + \varrho_{\pmin}^{} \sum_{\gamma \in L}  \int_0^t \ee^{-\varrho_{\pmin}^{} \sigma} (\ee^{\sigma N})(\alpha, \gamma)  (\ee^{(t - \sigma) N} \omega^{}_0)^{\{1\}}(\gamma) \otimes (\ee^{(t - \sigma) N} \omega^{}_0)^{\{2\}} (\gamma) \dd \sigma.
\end{equation}
For more than two loci, one can proceed in a similar fashion, disintegrating the solution conditional on the waiting time(s) between splitting events. However, this becomes cumbersome very quickly as one has to keep track of various different contributions, corresponding to different realisations of the jump chain of the (unlabelled) partitioning process. In particular, the form of these contributions changes, depending on the tree topology; see Fig.~\ref{topologies}.

\begin{figure}
\psfrag{a}{\hspace{-7mm}\raisebox{1mm}{$\{1,2,3\}$}}
\psfrag{c}{$\{2,3\}$}
\psfrag{e}{\raisebox{2.5mm}{$\{1\}$}}
\psfrag{f}{\hspace{-3mm}\raisebox{2.5mm}{$\{2\}$}}
\psfrag{g}{\hspace{-3mm}\raisebox{2.5mm}{$\{3\}$}}

\psfrag{a2}{\hspace{-7mm}\raisebox{1mm}{$\{1,2,3\}$}}
\psfrag{e2}{\raisebox{2.5mm}{$\{1\}$}}
\psfrag{f2}{\hspace{-3mm}\raisebox{2.5mm}{$\{2\}$}}
\psfrag{g2}{\hspace{-3mm}\raisebox{2.5mm}{$\{3\}$}}

\includegraphics[width = 0.7\textwidth]{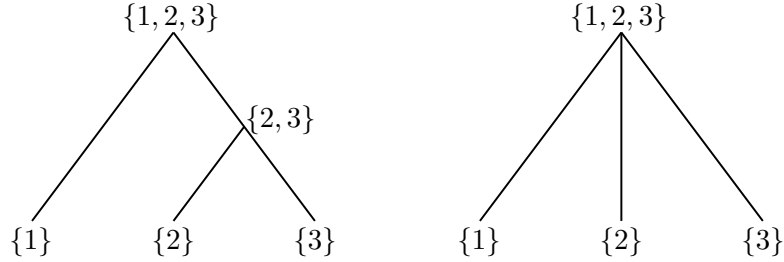}
\caption{\label{topologies}
Two realisations of the jump chain of the unlabelled partitioning process. Contributions to the solution corresponding to the left panel consist of two iterated integrals, while the topology on the right gives rise to a single integral as in Eq.~\eqref{continuousexample}. Note also that by permutation of the labels in the left panel, there are $2$ additional realisations of the jump chain with the same topology.
}
\end{figure}

\begin{remark}
It is straightforward to adapt the partitioning process in the diffusion limit to the setting with migration, both for finite, discrete sequences as treated by \citet{baakeesserprobst} as well as continuous sequences; see \citet{LambertSchertzer}. The new feature of this LPP with coalescence is that two blocks can only coalesce if they share the same label. However, an exhaustive treatment would go beyond the scope of this work.
\end{remark}

\section*{Acknowledgements}
It is our pleasure to thank Reinhard B\"urger for enlightening discussions and two anonymous referees for insightful comments.
This work was supported by the German Research Foundation (DFG), 
within the SPP 1590 (FA) and the CRC~1283, project C1 (EB); by  grant CMM Basal CONICYT, project AFB 170001 (SM); and
by ANID/Doctorado en el extranjero doctoral scholarship, grant number 2018-72190055 (IL).
Thanks for hospitality go from IL to the Bielefeld group and from EB to the Santiago group.

\bigskip

\medskip

\bigskip

\bigskip

\end{document}